\def\ll{[\![}
\def\rr{]\!]}
\def\R{\mathbb R}
\def\N{\mathbb N}
\def\C{\mathcal C}
\def\cal{\mathcal}
\def\E{{\cal E}}
\def\F{{\cal F}}
\def\G{{\cal G}}
\def\H{{\cal H}}
\def\P{{\cal P}}
\def\PP{{\rm Pol}}
\def\Lip{{\rm Lip}}
\def\tt{\mathbf{t}}
\def\Cir{{\rm Cir}}
\def\a{\alpha}
\def\b{\beta}
\def\g{\gamma}
\def\de{\delta}
\def\e{\varepsilon}
\def\om{\omega}
\def\s{\sigma}
\def\t{\mathbf{t}}
\def\G{\Gamma}
\def\Om{\Omega}
\def\pa{\partial}
\def\d{{\rm d}\,}
\def\hd{{\rm hd}\,}
\def\arc{{\rm arc}}
\DeclareMathOperator*{\diam}{diam}
\DeclareMathOperator*{\weakstar}{\overset{\ast}{\rightharpoonup}}
\def\big{\bigskip}
\newtheorem{theorem}{Theorem}[section]
\newtheorem*{theorem*}{Theorem}
\newtheorem*{corollary*}{Corollary}
\theoremstyle{definition}
\newtheorem{remark}[theorem]{Remark}
\newtheorem{theoremletter}{Theorem}
\numberwithin{equation}{section}
\numberwithin{figure}{section}
\title[]{On the emergence of almost-honeycomb structures in low-energy planar clusters}
\author{M. Caroccia}
\address{Politecnico di Milano, Dipartimento di matematica, Via Edoardo Bonardi 9 - 20133 Milano, Italy}
\email{marco.caroccia@polimi.it}
\author{K. DeMason}
\address{Department of Mathematics, The University of Texas at Austin,  2515 Speedway Stop C1200, Austin, Texas 78712-1202, USA}
\email{kdemason@utexas.edu}
\author{F. Maggi}
\address{Department of Mathematics, The University of Texas at Austin,  2515 Speedway Stop C1200, Austin, Texas 78712-1202, USA}
\email{maggi@math.utexas.edu}
\begin{document}

\begin{abstract}
Several commonly observed physical and biological systems are arranged in shapes that closely resemble an honeycomb cluster, that is, a tessellation of the plane by regular hexagons. Although these shapes are not always the direct product of energy minimization, they can still be understood, at least phenomenologically, as low-energy configurations. In this paper, explicit quantitative estimates on the geometry of such low-energy configurations are provided, showing in particular that the vast majority of the chambers must be generalized polygons with six edges, and be closely resembling regular hexagons. Part of our arguments is a detailed revision of the estimates behind the global isoperimetric principle for honeycomb clusters due to Hales \cite{hales}. \end{abstract}

\maketitle
\tableofcontents

\section{Introduction} \subsection{Overview} Honeycomb-like structures are commonly observed in physical and biological systems. While in some cases (e.g., in foams) their emergence is consequence of an {\it energy minimization} property, in many other systems, like those resulting from geological mechanisms (e.g., basalt columns), animal behavior (e.g., beehives) or animal morphology (e.g., compound eyes of arthropods), what is observed results from complex processes leading to the formation of {\it low-energy} (rather than energy minimizing) configurations. This fact provides the motivation to extend and adapt Hales' isoperimetric theorem \cite{hales}, which characterizes {\bf honeycomb clusters} (i.e., tessellations of the plane by unit-area, regular hexagons) as the only {\it energy minimizing} configurations {\it under periodic boundary conditions}, to the case of low-energy configurations (and without the periodicity restriction).
In this paper, partly by a comprehensive review of Hales' methods, we undertake the analysis of such low-energy configurations. In rough terms, our main result (Theorem \ref{thm main} below) states that {\it every low-energy planar cluster with $N$-many unit-area, simply connected chambers must be close, in a quantitative way as $N\to\infty$, to an ideal honeycomb, with a controlled number of defects and with a vast preponderance of almost-hexagonal cells}; and this, without said cluster being required to possess any energy minimizing property.

\subsection{Isoperimetric clusters and Hales' theorem} We frame our work in the setting of Almgren's theory of isoperimetric clusters \cite{Almgren76} as presented in \cite[Part IV]{maggiBOOK}. Since we only consider {\it planar} clusters, we do not need to work with Borel sets and sets of finite perimeter, and work directly with open sets with Lipschitz boundary $E\subset\R^2$. We denote by $|E|$ the area (Lebesgue measure) of any such $E$, and by $P(E)=\H^1(\pa E)$ its perimeter (one-dimensional Hausdorff measure of its topological boundary).

\medskip

Given $N\in\N$, $N\ge 2$, a planar {\bf $N$-cluster} is a family $\E=\{\E(h)\}_{h=1}^N$ of mutually disjoint, non-empty open sets $\E(h)\subset\R^2$ (the {\bf chambers of $\E$}) with finite area and Lipschitz regular boundary. The {\bf area and perimeter} of an $N$-cluster $\E$ are then
\[
|\E|=\left(|\E(1)|,...,|\E(N)|\right)\in\R^N\,,\qquad P(\E)=\frac12\sum_{h=0}^NP(\E(h))=\H^1(\pa\E)\,,
\]
where $\E(0)=\R^2\setminus\bigcup_{h=1}^N\overline{\E(h)}$ denotes the {\bf exterior chamber} of $\E$ (so that $|\E(0)|=+\infty$), and where $\pa\E=\bigcup_{h=1}^N\pa\E(h)$ is the {\bf boundary} of $\E$. We say that $\E$ is an {\bf isoperimetric cluster} if
\[
P(\E)\le P(\F)\,,\qquad\mbox{for all clusters $\F$ with  $|\F|=|\E|$}\,;
\]
and a {\bf unit-area isoperimetric cluster} if, in addition, $|\E(h)|=1$ for all $h=1,...,N$.

\medskip

Notice that the chambers of an $N$-cluster are not assumed to be connected. A disjoint family of $N'$-many open, connected subsets of $\R^2$ with Lipschitz boundaries could thus be regarded (in more than one way) as an $N$-cluster for different values of $N\le N'$. From a conceptual viewpoint, this is clearly a somehow flawed feature of the notion of $N$-cluster used here. However, this feature is also crucially important for proving Almgren's existence theorem \cite{Almgren76} for isoperimetric clusters: {\it for every $v\in\R^N$ with positive coordinates, there exist isoperimetric $N$-clusters $\E$ with $|\E|=v$; see \cite[Chapter 29]{maggiBOOK}.}

\medskip

This said, we have the following natural {\bf connectedness conjecture} about isoperimetric clusters:
\begin{equation}
  \label{conjecture simply connected}\tag{CC}
  \mbox{{\it isoperimetric clusters have simply connected chambers}.}
\end{equation}
This conjecture is widely open. Its validity has been confirmed in the few cases ($N=2$ \cite{foisyzimba}, $N=3$ \cite{wichi}, and $N=4$ with equal areas \cite{paolinitamagnini,paolinitortorelli}) where a complete classification of isoperimetric clusters is known. One does not expect to carry this approach much further, as obtaining characterizations of isoperimetric clusters with arbitrary values of $N$ and $v$ seems out of question. The two basic facts about isoperimetric clusters that are known for every $N$ and $v$ are the validity of {\bf Plateau laws} (boundaries of isoperimetric clusters consist of finitely many circular arcs/segments meeting in threes at 120-degrees; see, e.g. \cite[Theorem 30.7]{maggiBOOK}), and the local finiteness and constancy in $v$ of the possible diffeomorphic types of $\pa\E$; see \cite[Theorem 1.9]{CLM1}.

\medskip

In his celebrated paper \cite{hales}, Hales presents an argument that serves to prove two isoperimetric principles concerning ``honeycombs''. The first one states that, if $a,b>0$ are such that the flat torus $\R^2[a,b]$ of width $a$ and height $b$ admits a tiling $\H$ by $N$ unit-area, regular hexagons, then $\H$ is the unique unit-area isoperimetric $N$-cluster (modulo translations); see \cite{carocciamaggi} for a quantitative analysis of this isoperimetric principle. The second one states that if $\E$ is a unit-area isoperimetric $N$-cluster in $\R^2$, then\footnote{By a slight refinement of Hales' argument, \eqref{hales lower bound} can actually be improved to $\psi(N)\ge (12)^{1/4}\,N+K_0\,\sqrt{N}$ with $K_0=\sqrt\pi - \sqrt[4]{3}/\sqrt{2}\approx 0.84$. Such refinement simply consists in applying ``Dido's inequality'' to quantify the size of some non-negative terms that were just discarded in Hales' original presentation; see the proof of \cite[Theorem 2.1]{heppesmorgan}.}
\begin{equation}
\label{hales lower bound}
\psi(N)>(12)^{1/4}\,N\,,\qquad\forall N\ge 2\,,
\end{equation}
where we have set
\begin{eqnarray*}
&&\psi(N)=P(\E_N)\,,
\\
&&\mbox{($\E_N$ a generic unit-area isoperimetric $N$-cluster)}\,.
\end{eqnarray*}
The energy bound \eqref{hales lower bound} is the only evidence towards a second fascinating, challenging, and largely unexplored {\bf honeycomb conjecture}:
\begin{equation}\label{honeycomb conjecture}\tag{HC}
  \begin{split}
  &\mbox{{\it unit-area isoperimetric $N$-clusters with $N$ large}}
  \\
  &\mbox{{\it should locally coincide with honeycombs}}\,.
  \end{split}
\end{equation}
To understand the connection between \eqref{hales lower bound} and the honeycomb geometry, we notice that, starting from a unit-area regular hexagon $H$, one can add a first complete layer of unit-area regular hexagons around $H$ (which, when complete, results in a cluster of $N=7$ unit-area regular hexagons), then a second complete layer (which, when complete, results in a cluster of $N=19$ unit-area regular hexagons), and so on. For every $N\ge2$ one can smooth out the exterior edges of such clusters into circular arcs. The resulting construction, detailed in \cite[Theorem 2.1]{heppesmorgan}, gives
\begin{equation}\label{the energy upper bound}
\psi(N)\le (12)^{1/4}\,N+M_0\,\sqrt{N}+3\,,\qquad\forall N\ge 2\,,
\end{equation}
(where $M_0=\pi/A_0^{1/2}\approx 1.95$ and $A_0=3\sqrt{3}/2$ is the area of a regular hexagon of unit side length). The upper bound \eqref{the energy upper bound} on $\psi(N)$ implies the sharpness of \eqref{hales lower bound}, and clarifies its connection with the honeycomb geometry.

\subsection{Low-energy clusters and main result} Motivated by \eqref{the energy upper bound}, we say that a planar, unit-area $N$-cluster $\E$ is a {\bf low-energy cluster with exterior energy density $M$}, if
\begin{equation}
\label{M low}
P(\E)\leq (12)^{1/4}\,N +M\,\sqrt{N}\,.
\end{equation}
This condition amounts in asking that that the ``bulk'' of the cluster $\E$, that is, the set $E=\R^2\setminus\E(0)$, has perimeter $P(E)={\rm O}(\sqrt{|E|})$, while the ``internal perimeter'' of $\E$, that is, the length of $\pa\E$ due to interfaces $\pa\E(h)\cap\pa\E(k)$ with $h\ne k$, $h,k\ne 0$, is approximately that of an $\sqrt{N}\times\sqrt{N}$-chunk of the ideal honeycomb $\H$.

\medskip

By \eqref{the energy upper bound}, unit-area isoperimetric clusters satisfy \eqref{M low}. At the same time, \eqref{M low} rules out many $N$-clusters that are only {\it locally}, but not globally, isoperimetric, and that may fail to look like honeycombs; see
\begin{figure}
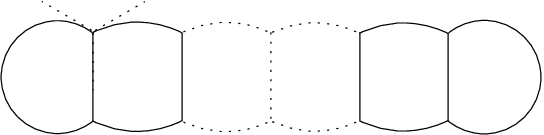\caption{{\small The idea behind the low-energy condition \eqref{M low} is that it identifies unit-area $N$-clusters whose ``internal perimeter'' is comparable to that of an $\sqrt{N}\times\sqrt{N}$-chunk of ideal honeycomb, and whose ``external perimeter'' is comparable to $\sqrt{N}$ (i.e., the square root of the area of the bulk of the cluster). Unit-area locally minimizing clusters may fail to satisfy this condition. For example, the $N$-cluster depicted here satisfies, for some $r_0>0$, the local isoperimetric property $P(\E)\le P(\F)$ for every $\F$ with $|\F|=|\E|$ with $\diam(\F(h)\Delta\E(h))\le r_0$, $h=1,...,N$; but it does not satisfy \eqref{M low} -- as it can be seen, for example, by looking at its external perimeter, which is ${\rm O}(N)$, compare with \eqref{external perimeter} in Theorem \ref{thm main}.}}\label{fig figure}
\end{figure}
Figure \ref{fig figure}.

\medskip

In summary, \eqref{M low} identifies a large class of $N$-clusters which, although not possessing any isoperimetric property, may still be expected, for $N$ large, to be close to honeycombs. Our main result confirms this expectation in a class $\C(N,M)$ of planar unit-area $N$-clusters that {\it contains every unit-area isoperimetric $N$-cluster with simply connected chambers} (i.e, {\it every} unit-area isoperimetric cluster, should conjecture \eqref{conjecture simply connected} hold true); see Remark \ref{remark CNM} below.

\medskip

We say that a unit-area $N$-cluster $\E$ in $\R^2$ belongs to $\C(N,M)$ if:
\begin{enumerate}
\item[(C1)] each $\E(h)$ has finitely many connected components $\{E_j^h\}_{j=1}^{N_h}$, called {\bf cells} (of $\E(h)$ or of $\E$, depending on the context); each cell of $\E$ is simply connected;

\item[(C2)] the boundary of each cell $E_j^h$ consists of finitely many bounded Lipschitz injective curves, called {\bf edges} (of $E_j^h$ or of $\E$, depending on the context); endpoints of edges are called {\bf vertexes};

\item[(C3)] each vertex is the endpoint of {\it exactly three} edges, and each cell has {\it at least two} edges;

\item[(C4)] the boundary $\pa\E$ of $\E$ is connected and satisfies \eqref{M low}, that is
\[
P(\E)\leq (12)^{1/4}\,N +M\,\sqrt{N}\,;
\]

\item[(C5)] denoting by $k_j^h=k_j^h(\E)$ the number of edges of the cell $E_j^h$ of $\E$, we require that
\begin{equation}
\label{bound on cells area}
|E_j^h| \ge \frac1{100}\,,\qquad\mbox{if $2\le k_j^h\le 6$}\,.
\end{equation}
\end{enumerate}

\begin{remark}[The class $\C(N,M)$ and isoperimetric clusters]\label{remark CNM}
  {\rm If $\E$ is a unit-area $N$-isoperimetric cluster, then: (i) each chamber of $\E$ has finitely many connected components, so that the first part of (C1) holds true; (ii) each cell of $\E$ is bounded by finitely many circular arcs/segments, therefore (C2) holds true; (iii) the validity of (C3) is a simple consequence of the Plateau laws; (iv) a sliding argument shows that $\pa\E$ is connected, so that $\E$ satisfies (C4) thanks to \eqref{the energy upper bound}. As a consequence, if each chamber $\E(h)$ of $\E$ has just one single (simply connected) component $E_1^h$, then (C1) and (C5) hold true (with $|E_1^h|=1$), and $\E\in\C(N,M)$ for some $M$. In summary, {\it every unit-area $N$-isoperimetric cluster with simply connected chambers} is an element of $\C(N,M)$ for some $M$.}
\end{remark}

As a last step towards the statement of our main result we introduce the following notation. For $\E\in\C(N,M)$ we introduce the family ${\rm Hex}(\E)$ of those (indexes of) chambers of $\E$ that {\bf are connected and have six edges},
\begin{eqnarray}\label{def of Hex}
{\rm Hex}(\E):=\Big\{h:\, N_h=1\,,k_1^h=6\Big\}\,,
\end{eqnarray}
and, for each $k\ge 2$, the family ${\rm Ch}_k(\E)$ of those (indexes of) chambers of $\E$ {\bf having at least one cell with $k$-sides}
\begin{eqnarray}
\label{def of Chk}
{\rm Ch}_k(\E):=\Big\{h:\mbox{$\exists\,j\in\{1,...,N_h\}$ s.t. $k_j^h=k$}\Big\}\,.
\end{eqnarray}
We also define the {\bf exterior perimeter of $\E$} by setting
\begin{equation}
  \label{def of Pext}
  P_{{\rm ext}}(\E):=P(E_1^0)\,,
\end{equation}
where, we recall, $E_1^0$ is the unique connected component of the exterior chamber $\E(0)$ with infinite area; the {\bf number of exterior edges of $\E$} is defined by setting
\begin{eqnarray}\label{def of edge ext}
{\rm edge}_{\rm ext}(\E):=k_1^0\,;
\end{eqnarray}
and the {\bf interior void of $\E$} as
\begin{equation}
  \label{def of void}
  \E_{\rm void}:=\E(0)\setminus E_1^0\,.
\end{equation}
(Notice that this set must have finite area). Finally, we denote by
\begin{equation}
  \label{dist hex}
  \d_{{\rm hex}}(E)=\inf\Big\{|E\Delta (x+r e^{i\,\theta}[H])|:x\in\R^2,\ \theta\in[0,2\,\pi], \ r^2=|E|\Big\}\,,
\end{equation}
the distance in area of $E\subset\R^2$ from a regular hexagon with area $|E|$ (here $H$ denotes a reference unit-area regular hexagon and $e^{i\theta}$ the angle $\theta$ counter-clockwise rotation of $\R^2$ around the origin). With this terminology in place, we state our main theorem.

\begin{theorem}[Honeycomb-like structure of low-energy clusters]\label{thm main}
There exists a computable constant $C_0$ with the following property. If $N\ge 2$ and $\E\in\C(N,M)$, then
\begin{eqnarray}
  \label{new conclusion 1}
  \#\,{\rm Hex}(\E)\!\!&\ge&\!\! N-C_0\,M\,\sqrt{N}\,,
\end{eqnarray}
with
\begin{eqnarray}
  \label{new conclusion 2}
  \frac1{\#\,{\rm Hex}(\E)}\,\sum_{h\in {\rm Hex}(\E)}\,\d_{{\rm hex}}\big(\E(h)\big)^2\!\!&\le&\!\! \frac{C_0\,M}{\sqrt{N}}\,,
\end{eqnarray}
and
\begin{eqnarray}\label{external perimeter}
P_{\rm{ ext}}(\E)\!\!&\le&\!\! C_0\,M\,\sqrt{N}\,,
\\
\label{external edges}
{\rm edge}_{{\rm ext}}(\E)\!\!&\le&\!\!C_0\,M\,\sqrt{N}\,,
\\
\label{interior void}
|\E_{\rm void}|\!\!&\le&\!\!C_0\,M\,\sqrt{N}\,,
\\
\label{non hexagonal components}
\#\,{\rm Ch}_k(\E)\!\!&\le&\!\!\frac{C_0\,M}{|k-6|}\,\sqrt{N}\,.
\end{eqnarray}
Finally, there is at least one $k\le 5$ such that ${\rm Ch}_k(\E)\ne\varnothing$.
\end{theorem}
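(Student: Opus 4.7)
The strategy is to apply a Hales-type hexagonal isoperimetric inequality cell-by-cell, combine with the global perimeter bound (C4) to obtain a single ``defect budget'' of size $O(M\sqrt N)$, distribute this budget among several nonnegative defect terms to obtain each quantitative conclusion, and close the last claim by a combinatorial Euler argument. The analytic engine is a cell-wise estimate
\[
 P(E_j^h)\;\ge\;p_{k_j^h}\,\sqrt{|E_j^h|}\;+\;\sigma_{k_j^h}(E_j^h),
\]
valid for each cell of each chamber $h\ge 1$, where $p_k$ denotes the perimeter of the unit-area regular $k$-gon (so $p_6=2\,(12)^{1/4}$ and $p_k>p_6$ for $k\ne 6$), and $\sigma_k(\cdot)\ge 0$ is a shape defect vanishing precisely for regular $k$-gons. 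From a quantitative revision of \cite{hales} we must extract two features: a \emph{linear polygon gap} $p_k-p_6\ge c_0|k-6|$ for $k\ne 6$, and a \emph{quadratic shape bound} $\sigma_6(E)\ge c_0\,\d_{{\rm hex}}(E)^2$. The area lower bound in (C5) keeps $\sqrt{|E_j^h|}$ bounded away from zero for small-sided cells, so these defects remain genuine on the unit-area scale.

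Summing over chambers $h\ge 1$ and using the identity $2P(\E)=P(\E(0))+\sum_{h\ge 1}P(\E(h))$, the unit-area constraint $\sum_j|E_j^h|=1$, and strict subadditivity of $\sqrt{\cdot}$, one obtains
\[
 2P(\E)\;\ge\;P(\E(0))+p_6\,N+D(\E),
\]
where $D(\E)\ge 0$ collects three independent nonnegative defects: the polygon-count defect $\sum_{k\ne 6}(p_k-p_6)\,\#\{\text{cells with }k\text{ sides}\}$, the shape defect $c_0\sum_{h\in{\rm Hex}(\E)}\d_{{\rm hex}}(\E(h))^2$, and a positive loss from each chamber with $N_h\ge 2$ picked up through strict subadditivity. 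Since $p_6\,N=2(12)^{1/4}N$, hypothesis (C4) gives $2P(\E)\le p_6 N+2M\sqrt N$, hence
\[
 P(\E(0))+D(\E)\;\le\;2M\sqrt N.
\]
Each conclusion now follows by isolating the relevant nonnegative term: \eqref{external perimeter} from $P(E_1^0)\le P(\E(0))$; \eqref{interior void} from the classical isoperimetric bound $\sum_V P(V)\ge 2\sqrt{\pi|\E_{\rm void}|}$ squared; \eqref{non hexagonal components} from the linear polygon gap; \eqref{new conclusion 2} from the quadratic shape bound; \eqref{new conclusion 1} from the fact that each $h\notin{\rm Hex}(\E)$ pays a universal positive defect (either from being multi-cell, or from being a single non-hexagonal cell); and \eqref{external edges} from the per-cell Hales bound combined with (C5), which yields a uniform length lower bound on every edge of a small-sided cell so that exterior edges of such cells are controlled by $P(E_1^0)$, while cells with $\ge 7$ sides are bounded in number by \eqref{non hexagonal components}.

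For the final assertion — existence of some $k\le 5$ with ${\rm Ch}_k(\E)\ne\varnothing$ — the argument is purely topological. By (C4) the planar graph $\pa\E$ is connected and by (C3) it is $3$-regular, so $3V=2E$; Euler's identity $V-E+F=2$ then yields $F=E/3+2$, where $F$ counts all faces of the embedding including the unique unbounded face $E_1^0$. If every bounded face had $\ge 6$ edges, the face handshake $2E=\sum_{f}\deg(f)$ would force $2E\ge 6(F-1)+k_1^0=2E+6+k_1^0$, an absurdity; hence some bounded face (a chamber cell or a bounded void component) has $\le 5$ edges, and as the index $h=0$ is admissible in the definition of ${\rm Ch}_k(\E)$, the claim follows. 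The principal technical obstacle sits entirely in the cell-wise step: securing both the sharp linear polygon gap and the quadratic shape defect from Hales' truncation and turning-angle scheme requires careful bookkeeping of constants throughout his truncation/Jensen estimates, and, near regular hexagons, a Fuglede-type rigidity argument to upgrade qualitative minimality into an $L^2$ shape stability estimate.
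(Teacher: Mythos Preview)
Your central cell-wise inequality $P(E_j^h)\ge p_{k_j^h}\sqrt{|E_j^h|}+\sigma_{k_j^h}(E_j^h)$ with $\sigma_k\ge 0$ is \emph{false} for curvilinear cells, and this is precisely the obstacle Hales' method is designed to overcome. A unit-area disk with six marked boundary points is a curvilinear hexagon with perimeter $2\sqrt\pi<2(12)^{1/4}=p_6$, so no nonnegative shape defect can exist. The cells of a cluster in $\C(N,M)$ are bounded by Lipschitz arcs, not segments; the paper's Theorem~\ref{thm quantita hales HI} (the quantitative Hales inequality) accordingly involves a \emph{signed} secant-area term $(12)^{1/4}\sigma(\gamma,\tt)$ on the left-hand side, not a nonnegative defect on the right. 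The whole mechanism is that these signed terms cancel pairwise across shared interior edges when you sum over all bounded cells (including the bounded components of $\E(0)$), leaving only $-\sigma(\gamma_1^0,\tt_1^0)$, which is then absorbed into $P(E_1^0)$ via Dido's inequality. Your Fejes T\'oth-style summation, valid only for polygonal clusters, omits this cancellation step and therefore the global lower bound $2P(\E)\ge P(\E(0))+p_6 N+D(\E)$ does not follow.

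Two downstream issues confirm the mismatch. First, your bound on $|\E_{\rm void}|$ via the isoperimetric inequality applied to void components yields only $|\E_{\rm void}|\le C(M\sqrt N)^2=CM^2N$, not the claimed $C_0M\sqrt N$; the paper instead includes the bounded void cells $E_j^0$, $j\ge 2$, in the Hales sum, so that their areas enter the right-hand side linearly as $2(12)^{1/4}|\E_{\rm void}|$. Second, your route to \eqref{external edges} is circuitous: in the paper it drops out directly from Euler's formula $\sum_{(h,j)\ne(0,1)}(6-k_j^h)=6+k_1^0$ combined with the $a_1(k-6)$ term in Hales' inequality. Your Euler argument for the final assertion is essentially correct (and is indeed how the paper proves it), but the quantitative conclusions require the signed-area cancellation machinery that your proposal bypasses.
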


\begin{remark}
{\rm Conclusions \eqref{new conclusion 1} and \eqref{new conclusion 2} state that the the vast majority of the chambers of $\E$ are (simply) connected, posses six curvilinear edges, and are close to be regular hexagons. Conclusion \eqref{external perimeter} implies that only a very small fraction of the total cluster perimeter $P(\E)={\rm O}(N)$ is used to compound the bulk of the chambers of $\E$. Given conclusion \eqref{external perimeter}, conclusion \eqref{external edges} indicates that the generic external edge of $\E$ must have length of order one. Simple examples show that {\it all these conclusions are sharp} in the class $\C(N,M)$.}
\end{remark}

\begin{remark}[Connection with the connectedness conjecture]
  {\rm Thanks to Remark \ref{remark CNM}, should one be able to prove the validity of the area lower bound \eqref{bound on cells area} for the cells of isoperimetric clusters, then conclusion \eqref{new conclusion 1} in Theorem \ref{thm main} would imply, in particular, a partial answer to the connectedness conjecture \eqref{conjecture simply connected}, namely, that for a unit-area isoperimetric $N$-cluster, $(N-{\rm O}(\sqrt{N}))$-many chambers are connected.}
\end{remark}

\subsection{Hales' hexagonal isoperimetric inequality and strategy of proof} The proof of Theorem \ref{thm main} is based on a careful extension of the methods developed by Hales in \cite{hales}. The key result in Hales' paper is an inequality for immersed planar curves, called here {\it Hales' hexagonal isoperimetric inequality}, see \eqref{hexagon II hales improv} below. The key step in proving Theorem \ref{thm main} is obtaining a quantitative improvement of Hales' hexagonal isoperimetric inequality. Thus, in order to illustrate our strategy of proof, we need to introduce Hales' hexagonal isoperimetric inequality.

\medskip

Hales' hexagonal isoperimetric inequality is a direct improvement of the hexagonal isoperimetric inequality. Denoting by $\PP_k$ the family of planar polygons with $k$-sides ($k$-polygons) and by $p(k)$ the perimeter of a reference unit-area regular $k$-polygon, the {\bf $k$-polygonal isoperimetric inequality} states that
\begin{equation}
  \label{isoperimetric inequality kgons}
  P(\Pi)\ge p(k)\,\sqrt{|\Pi|}\,,\qquad \forall\Pi\in\PP_k\,,
\end{equation}
with equality if and only if $\Pi$ is a regular $k$-polygon. Here $p(k)$ is explicitly given by
\begin{equation}
    \label{p kappa}
    p(k)=2\,\sqrt{k\,\tan(\pi/k)}\,.
\end{equation}
The case $k=6$ of \eqref{isoperimetric inequality kgons} is of course the {\bf hexagonal isoperimetric inequality},
\begin{equation}
  \label{hexagon II}
  P(\Pi)\ge 2\,(12)^{1/4}\,\sqrt{|\Pi|}\,,\qquad\forall\Pi\in\PP_6\,,
\end{equation}
with equality if and only if $\Pi$ is a regular hexagon. Now, on noticing that $t\mapsto 2\,\sqrt{t\,\tan(\pi/t)}$ is strictly decreasing and convex, and setting
\[
a(k)=\frac{p(k)-p(6)}{6-k}\,,\qquad\forall\, k\ne 6\,,
\]
(so that $a(k)>0$), we can deduce from \eqref{isoperimetric inequality kgons} that if $a\in(a(7),a(5))$, then
\begin{equation}
  \label{hexagon II toth improv}
  P(\Pi)+a\,(k-6)\ge 2\,(12)^{1/4}\,\sqrt{|\Pi|}\,,\qquad\forall\Pi\in\bigcup_{k=3}^\infty \PP_k\,,
\end{equation}
with equality if and only if $\Pi$ is a regular hexagon. This is of course a rather trivial generalization of \eqref{hexagon II}, but it is already sufficient to prove the lower bound $P(\E)>(12)^{1/4}\,N$ on the special class of {\bf unit-area polygonal Plateau-type $N$-clusters $\E$} whose chambers have unit area and consist of finitely many connected polygonal cells, and whose vertexes are the endpoints of {\it exactly three segments}. Indeed, since all the cells $E_j^h$ of $\E$ with $(h,j)\ne (0,1)$ are polygons with finite area, we can apply \eqref{hexagon II toth improv} on them. Denoting by $k_j^h$ the number of sides of $E_j^h$, we thus find that
\begin{eqnarray}\label{lower bound start}
2\,P(\E)\ge \sum_{(h,j)\ne(0,1)}P(E_j^h)\ge   2\,(12)^{1/4}\,\sum_{(h,j)\ne(0,1)}\sqrt{|E_j^h|}+a\,\sum_{(h,j)\ne(0,1)}(6-k_j^h)\,.
\end{eqnarray}
On the one hand, since $|E_j^h|\le|\E(h)|=1$ for $h=1,...,N$ implies $|E_j^h|\le\sqrt{|E_j^h|}$, we get
\begin{equation}
\label{total area}
\sum_{(h,j)\ne(0,1)}\sqrt{|E_j^h|} = \sum_{j=2}^{N_0}\sqrt{|E_j^0|}+\sum_{(h,j),\ h\neq 0}\sqrt{|E_j^h|}\geq\sum_{j=2}^{N_0} \sqrt{|E_j^0|}+\sum_{h=1}^N|\E(h)|\ge N\,.
\end{equation}
On the other hand, counting that each vertex of $\pa\E$ is the endpoint of three segments, we can associate to $\pa\E$ a regular graph on $\mathbb{S}^2$ with numbers of faces $F$, edges $E$, and vertexes $V$ given by
 \[
 F=\sum_{h=0}^NN_h\,,\qquad E=\frac12\sum_{h=0}^N\sum_{j=0}^{N_h}k_j^h\,,\qquad V=\frac23\,E\,,
 \]
so that, by Euler's formula $2=V-E+F$,
 \[
  2=-\frac16\sum_{h=0}^N\sum_{j=0}^{N_h}k_j^h+\sum_{h=0}^NN_h\,,\quad\mbox{that is}\quad 12=\sum_{h=0}^N\sum_{j=1}^{N_h}(6-k_j^h)\,,
  \]
and thus
\begin{equation}
\label{euler formula 2}
\sum_{(h,j)\ne(0,1)}(6-k_j^h)=6+k_1^0\,.
\end{equation}
The combination of $a>0$, \eqref{lower bound start}, \eqref{total area}, and \eqref{euler formula 2} gives $P(\E)>(12)^{1/4}\,N$ for every unit-area polygonal Plateau-type $N$-cluster $\E$ as defined above.

\medskip

The above argument, due to Fejes T\'oth \cite{fejes43}, has been presented in detail since the class of unit-area isoperimetric $N$-clusters $\E$ considered in Hales' isoperimetric principle \eqref{hales lower bound} is actually not that far from the class of unit-area polygonal Plateau-type $N$-clusters: the only difference is that, in the former case, cells may be bounded by circular arcs (with possible non-zero curvature) rather than just by segments (as in the latter). Hales' hexagonal isoperimetric inequality consists of an extension of \eqref{hexagon II toth improv} that suffices to repeat Fejes T\'oth's argument on isoperimetric clusters.

\medskip

The following notation will be needed in stating Hales' hexagonal isoperimetric inequality. Given $s,t\in\mathbb{S}^1=\{z\in\mathbb{C}:|z|=1\}$, $s\ne t$, we denote by $[s,t]$ the set of points in $\mathbb{S}^1$ obtained by moving from $s$ to $t$ in the orientation of $\mathbb{S}^1$ induced by $e^{i\,\theta}$. We say that $I$ is an {\bf interval of $\mathbb{S}^1$}, if $I=\mathbb{S}^1$ or $I=[s,t]$ for some $s\ne t$. If $\g\in{\rm Lip}(I;\R^2)$, then we denote by $L(\g)$ and $A(\g)$ the {\bf length} and {\bf oriented area} of $\g$, defined by setting
\[
L(\gamma)=\int_I |\gamma'(t)| \, dt\,,\qquad A(\gamma) = \int_{\gamma} x \, dy=\int_I\,\gamma^{(1)}(t)\,(\gamma^{(2)})'(t)\,dt\,.
\]
(Here and in the following $x^{(i)}$ is the $i$-th component of $x\in\R^2$. Also, with this convention, if $\g(\theta)=e^{i\,\theta}$ ($\theta\in\mathbb{S}^1$), then $A(\g)=\pi$, and if $\g(\theta)=e^{-i\,\theta}$ ($\theta\in\mathbb{S}^1$), then $A(\g)=-\pi$.) Given $x,y\in\R^2$ we denote by $\ll x,y\rr$ a constant speed parametrization (over a closed interval in $\mathbb{S}^1$) of the segment $\{t\,x+(1-t)\,y:t\in[0,1]\}$. With this notation, given $\g\in\Lip(\mathbb{S}^1;\R^2)$ and $[s,t]\subset\mathbb{S}^1$, we introduce the {\bf signed secant area of $\g$ over $[s,t]$} by setting
\begin{equation}
  \label{alfa}
  \a(\g,s,t)=A\left(\g|_{[s,t]}\right)+A\left(\ll\g(t),\g(s)\rr\right)\,;
\end{equation}
see
\begin{figure}
\label{fig gamma}
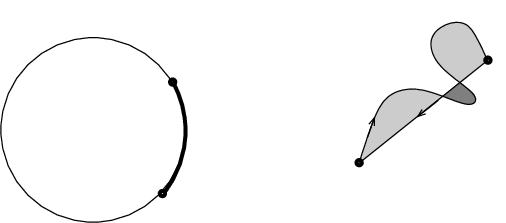
\caption{{\small In the example in the picture, the quantity $\a(\g,s,t)$ defined in \eqref{alfa} is obtained by subtracting the areas depicted in light grey from the areas depicted in dark grey.}}
\end{figure}
Figure \ref{fig gamma}. Finally, given $k\ge 2$, we say that $\tt:\{1,...,k\}\to\mathbb{S}^1$ defines an {\bf ordered $k$-partition of $\mathbb{S}^1$} if, for every $i=1,...,k-1$, $[\tt(i),\tt(i+1)]$ does not contain any $\tt(j)$ with $j\ne i,i+1$. Denoting by $\P_k$ the set of ordered $k$-partitions of $\mathbb{S}^1$, and setting $\tt(k+1)=\tt(1)$ if $\tt\in\P_k$, we define a functional
\[
\s:\Lip(\mathbb{S}^1;\R^2)\times\P_k\to\R\,,
\]
by taking
\begin{equation}
  \label{def of sigma gtt}
  \s(\g,\tt)
=\sum_{i=1}^{k} \min\Big\{\frac12,\max\Big\{-\frac12,\a\Big(\g,\tt(i),\tt(i+1)\Big)\Big\}\Big\}\,.
\end{equation}
Thus $\s(\g,\tt)$ is obtained by adding up the signed secant areas defined by $\g$ and the nodes of the ordered $k$-partition $\tt$, truncated in the interval $[-1/2,1/2]$.
\begin{theoremletter}[Hales' hexagonal isoperimetric inequality, \cite{hales}]\label{thm hales HI} {\it There exists a constant $a>0$ with the following property.  If $k\geq 2$, $(\gamma,\tt)\in\Lip(\mathbb{S}^1;\R^2)\times \mathcal{P}_k$, and
\begin{equation}
\label{Agamma hales bound}
   A(\g)\geq \frac{2\pi}{\sqrt{3}\,k^2}\,,
\end{equation}
then
\begin{equation}
\label{hexagon II hales improv}
L(\gamma) + a\,(k-6) + (12)^{1/4}\, \sigma(\gamma, {\bf{t}}) \geq 2(12)^{1/4}\,\min\{1,A(\gamma)\}
\end{equation}
with equality if and only if $(\gamma,\tt)$ corresponds\footnote{That is, $\gamma$ is an injective parametrization of the boundary of a unit-area regular hexagon and $\{\gamma(\tt(i))\}_{i=1}^6$ are the vertexes of said hexagon.} to a unit-area regular hexagon.}
\end{theoremletter}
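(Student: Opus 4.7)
The plan is to reduce to piecewise-circular closed curves, then combine a linearized polygonal inequality (in the spirit of \eqref{hexagon II toth improv}) with an arc-by-arc estimate controlling length excess by signed secant area.

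\textbf{Step 1 (Arc reduction).} First I would fix the marked vertices $v_i := \gamma(\mathbf{t}(i))$ and the signed secant areas $\alpha_i := \alpha(\gamma, \mathbf{t}(i), \mathbf{t}(i+1))$. Among Lipschitz curves from $v_i$ to $v_{i+1}$ with prescribed enclosed signed area on each side of the chord, the classical isoperimetric inequality for circular segments gives that the length-minimizer is a circular arc. This replacement preserves $\sigma(\gamma,\mathbf{t})$ and $A(\gamma)$ while only decreasing $L(\gamma)$, so it suffices to prove \eqref{hexagon II hales improv} for piecewise-circular closed curves. A separate direct computation --- comparing the derivatives of arc length and enclosed area as the arc bulges --- then reduces further to the regime $|\alpha_i|\le 1/2$ for every $i$, in which the truncation in \eqref{def of sigma gtt} is inactive. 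After this reduction $\sigma(\gamma,\mathbf{t}) = \sum_i \alpha_i$ and $A(\gamma) = A(P_0) + \sum_i \alpha_i$, where $P_0$ is the (possibly non-convex or immersed) $k$-gon with oriented vertices $v_1,\ldots,v_k$.

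\textbf{Step 2 (Splitting and polygonal input).} Setting $c_i := |v_{i+1} - v_i|$ and $\ell_i := L(\gamma|_{[\mathbf{t}(i),\mathbf{t}(i+1)]})$, I would write
\begin{equation*}
L(\gamma) + (12)^{1/4}\,\sigma(\gamma,\mathbf{t}) = \sum_{i=1}^{k} c_i + \sum_{i=1}^{k}\bigl[(\ell_i - c_i) + (12)^{1/4}\,\alpha_i\bigr].
\end{equation*}
An extension of \eqref{hexagon II toth improv} from simple to immersed polygons (by induction on the number of self-crossings of $P_0$, each resolved into two sub-polygons whose perimeters and signed areas sum back to those of $P_0$) gives, for any admissible $a \in (a(7), a(5))$,
\begin{equation*}
\sum_{i=1}^{k} c_i + a\,(k-6) \;\geq\; 2\,(12)^{1/4}\,\sqrt{|A(P_0)|}\,.
\end{equation*}
For each arc, an explicit circular-segment computation produces an inequality of the form $(\ell_i - c_i) + (12)^{1/4}\,\alpha_i \geq 2\,(12)^{1/4}\,\Psi(\alpha_i, c_i)$, where $\Psi$ is calibrated so that $\sqrt{|A(P_0)|} + \sum_i \Psi(\alpha_i, c_i)$ dominates $\min\{1, A(P_0) + \sum_i \alpha_i\}$; the main tools in this last comparison are the concavity of $\sqrt{\cdot}$ and the bound $2\sqrt{x} \geq \min\{2, 2x\}$ for $x \geq 0$. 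The hypothesis \eqref{Agamma hales bound} is precisely what rules out the degenerate regime in which $P_0$ collapses and long thin arcs account for all the area.

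\textbf{Step 3 (Rigidity and main obstacle).} Rigidity would then follow by propagating equality backwards through the chain: the polygonal inequality forces $k = 6$ and $P_0$ to be a regular hexagon; the arc inequalities force $\alpha_i = 0$, so each arc is a straight segment; hence $\gamma$ parametrizes the boundary of a unit-area regular hexagon with the marked vertices at its corners. The hard part is the simultaneous calibration of the constant $a$ and the arc majorant $\Psi$: the constant $a$ must lie in $(a(7), a(5))$ and be small enough that the linear penalty $a(k-6)$ never overtakes the area gain $\sqrt{|A(P_0)|}$ for large $k$, while $\Psi$ must be tight at $\alpha_i = 0$ (to preserve rigidity) yet large enough globally to absorb bulging arcs with $|\alpha_i|$ close to $1/2$ and to cover the low-$k$ degenerate cases $k = 2, 3$ (where $P_0$ is a segment or a triangle). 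This calibration is the technical heart of Hales' original argument and, I expect, the main obstacle, as it genuinely requires the case-by-case geometric analysis carried out in \cite{hales}.
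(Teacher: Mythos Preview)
Your arc reduction in Step~1 matches the paper's first reduction (Step one of Section~\ref{section hales intermed improv}, where the paper proves the strengthening Theorem~\ref{thm slight improve}). Two points, however, diverge from what the paper actually does.

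First, the reduction to $|\alpha_i|\le 1/2$ is only half right: one can replace any arc with $\alpha_i<-1/2$ by one with $\bar\alpha_i=-1/2$ (this decreases $L$, increases $A(\gamma)$, and leaves $\sigma$ unchanged, since the truncation was already active), but the analogous move for $\alpha_i>1/2$ would \emph{decrease} $A(\gamma)$ and hence may increase the deficit. The paper does not reduce this case away; it disposes of it by a direct estimate (Step four of Section~\ref{section hales intermed improv}) combining Dido's inequality edge-by-edge with the isoperimetric inequality applied to the curve obtained by reflecting all negative arcs across their chords.

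Second, and more substantially, the paper does not attempt the calibration of a single arc-majorant $\Psi$ as in your Step~2. Instead, after one further reduction (Step three) forcing all the $\alpha_i$'s lying in $[-1/2,1/2]$ to share a common sign, it records four independent lower bounds for $L(\gamma)$---the isoperimetric inequality, the isoperimetric inequality after reflecting negative arcs, Dido arc-by-arc, and your immersed-polygonal inequality---and finishes by a purely scalar case analysis in the two variables $(x,y)=(A(\gamma),\sigma(\gamma,\mathbf{t}))$, showing that $\max_{j}f_{k,j}(x,y)>0$ on the relevant rectangle for the specific choice $a=3/50$. The polygonal bound (your $\sum_i c_i$ term, which is the paper's $f_{k,4}$) is decisive only in a narrow strip near $y=0$; for $|y|$ bounded away from zero the isoperimetric and Dido bounds take over. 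Your additive decomposition is a legitimate alternative starting point, but making it work would still require quantifying $\sum_i(\ell_i-c_i)$ in the regime where $A(P_0)$ is moderate and $\sigma$ is positive (e.g.\ $A(P_0)\approx\sigma\approx 1/2$, where $2\sqrt{A(P_0)}+\sigma<2$ and the polygonal term alone is insufficient), so the case analysis does not disappear---as your own final paragraph correctly anticipates.
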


\begin{remark}\label{remarks about hales improv} {\rm {\bf (i):} Hales proves \eqref{hexagon II hales improv} with $a=0.0505$. Notice that this choice satisfies $a\in(a(7),a(5))$, and is thus admissible in \eqref{hexagon II toth improv}. In particular, when $\gamma$ parameterizes the boundary of a {\it unit-area} polygon and $\tt$ ranges through the vertexes of that polygon, then \eqref{hexagon II hales improv} boils down to \eqref{hexagon II toth improv}; {\bf (ii):} One cannot take a different value than $(12)^{1/4}$ for the constant in front of $\sigma(\g,\tt)$: this can be verified by taking $k=6$, $\g$ a parametrization of a unit-area regular hexagon $H$, $\tt$ ranging through the vertexes of $H$, and then modifying $\g$ by taking small circular variations of the edges of $H$ and subsequent rescalings; {\bf (iii):} The inequality cannot hold with $A(\g)$ in place of $\min\{1,A(\gamma)\}$ because of the different scaling of $L(\g)$ and $A(\g)$ -- of course, a variant of \eqref{hexagon II hales improv} with $\sqrt{A(\g)}$ in place of $\min\{1,A(\gamma)\}$ would have looked more natural; {\bf (iv):} taking $k<6$, and testing \eqref{hexagon II hales improv} with $t\,\g$ in place of $\g$, we eventually obtain a contradiction as $t\to 0^+$; this is why a lower bound on $A(\g)$ must be assumed.}
\end{remark}

\begin{remark}[Proof of \eqref{hales lower bound} and assumption \eqref{Agamma hales bound}]
  \label{remark about the lb} {\rm It seems useful to recall how Theorem \ref{thm hales HI} is used in proving \eqref{hales lower bound}, as this point allows us to illustrate the reason for the particular form \eqref{Agamma hales bound} of the lower bound on $A(\gamma)$ assumed in Theorem \ref{thm hales HI}. To this end, let us introduce the functional
  \[
  \de(\F)=P(\F)-(12)^{1/4}\,\sum_{h=1}^N\min\left\{1,|\F(h)|\right\}\,,\qquad\mbox{$\F$ an $N$-cluster}\,.
  \]
  When $\E$ is a unit-area isoperimetric $N$-cluster, then $\de(\E)=P(\E)-(12)^{1/4}\,N$, and \eqref{hales lower bound} is equivalent to $\de(\E)>0$. As Hales argues in \cite[Remark 2.7]{hales}, if a cell $E_j^h$ of $\E$ is such that $|E_j^h|< 2\,\pi/(\sqrt3\,(k_j^h)^2)$, then a new $N$-cluster $\F$ can be defined so that $\de(\F)\le\de(\E)$ and $\F(h)$ has one less cell than $\E(h)$ (one simply has to add $E_j^h$ to a cell $E_i^m$, $m\ne h$, sharing with $E_j^h$ its longest edge). In finitely many steps, one constructs an $N$-cluster $\G$ such that $\de(\G)\le\de(\E)$, each cell of $\G$ is simply connected and satisfies $|G_j^h|\ge 2\,\pi/(\sqrt3\,(k_j^h)^2)$. Theorem \ref{thm hales HI} can then be applied to each cell of $\G$, and, adding up over the number of cells, Fejes T\'oth's argument can be repeated with minor modifications to conclude that $\de(\G)$, thus $\de(\E)$, is positive (compare with the proof of Theorem \ref{thm main}).}
\end{remark}

Coming back to the presentation of the proof of our main result, Theorem \ref{thm main}, the key ingredient will be obtaining the following {\it quantitative improvement} of Hales' hexagonal isoperimetric inequality \eqref{hexagon II hales improv}.

\begin{theorem}[A quantitative Hales' hexagonal isoperimetric inequality]\label{thm quantita hales HI} There exist positive constants $a_1$, $a_2$, and $a_3$ such that, setting $a_3(k)=0$ if $k\ne 6$ and $a_3(6)=a_3$, then the following holds. If $k\ge 2$, $(\g,\tt)\in\Lip(\mathbb{S}^1;\R^2)\times \P_{k}$, $\g$ is injective, $A(\g)\le 1$, and
\begin{equation}
\label{Agamma our bound}
A(\g)\ge\frac1{100}\,,\qquad\mbox{if $2\le k\le 6$}\,,
\end{equation}
then
\begin{equation}\label{hexagon II our improv}
  \begin{split}
  L(\g)+a_1\,(k-6)+(12)^{1/4}\,\s(\g,\tt)&\ge2\,(12)^{1/4}\,A(\g)
  \\
  &\,\,\,+a_2\,|k-6|
  \\
  &\,\,\,+a_3(k)\,\Big\{\d_{{\rm hex}}(E_{\gamma})^2+\big(1-A(\g)\big) \Big\}\,,
  \end{split}
\end{equation}
where $E_{\gamma}$ denotes the bounded connected component of $\R^2\setminus \gamma(\mathbb{S}^1)$ defined by $\g$ according to Jordan's theorem.
\end{theorem}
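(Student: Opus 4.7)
The plan is to split the proof into three regimes according to the value of $k$, with the inscribed polygon $\Pi=\Pi(\gamma,\tt)$ on vertexes $\{\gamma(\tt(i))\}_{i=1}^k$ as the main intermediate object. Throughout I will exploit the identity $A(\gamma)=A(\Pi)+\sum_{i=1}^k\alpha(\gamma,\tt(i),\tt(i+1))$ (Green's theorem) and the chord/arc bound $L(\gamma)\ge P(\Pi)$; the truncation at $\pm 1/2$ in the definition of $\sigma(\gamma,\tt)$ is handled by observing that, by injectivity of $\gamma$ and $A(\gamma)\le 1$, only a universally bounded number of arcs can carry $|\alpha(\gamma,\tt(i),\tt(i+1))|>1/2$, and on each such ``large bulge'' the area lost to truncation can be traded against excess arc length via the classical isoperimetric inequality applied to the arc-plus-chord region.

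For $k\ge 7$, I would set $a_1:=a+a_2$ with $a$ the Hales constant and $a_2>0$ small; whenever Hales' hypothesis \eqref{Agamma hales bound} holds, the desired inequality (with $a_3(k)=0$) is obtained by an algebraic rearrangement of \eqref{hexagon II hales improv} after using $A(\gamma)\le 1$ to replace $\min\{1,A(\gamma)\}$ by $A(\gamma)$. In the complementary small-area regime $A(\gamma)<2\pi/(\sqrt 3\,k^2)$, one argues directly: injectivity of $\gamma$ and the arc-versus-chord isoperimetric estimate force the excess length contributions to dominate any potential truncation loss in $\sigma$, while the linear-in-$k$ slack $a_1(k-6)$ on the left easily absorbs the right-hand side. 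For $2\le k\le 5$, the lower bound $A(\gamma)\ge 1/100$ from \eqref{Agamma our bound} is available and $6-k\in\{1,2,3,4\}$ is bounded, so only a universal positive amount of slack is required; applying the polygonal isoperimetric inequality \eqref{isoperimetric inequality kgons} to a suitable simple polygon replacing $\Pi$, the strict inequality $p(k)>p(6)$ together with $A(\gamma)\ge 1/100$ delivers the needed positive excess.

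The genuinely hard case is $k=6$, where quantitative stability must be extracted. The plan is to combine two ingredients. First, a quantitative hexagonal isoperimetric inequality of Fuglede type: there exists $c>0$ such that any convex hexagon $\Pi$ with $|\Pi|\le 1$ satisfies $P(\Pi)-2(12)^{1/4}\sqrt{|\Pi|}\ge c\,\d_{{\rm hex}}(\Pi)^2$, which can be obtained either from the general stability theory for anisotropic isoperimetric problems, or by a direct compactness/Taylor expansion argument on the finite-dimensional moduli space of hexagons of unit area. Second, a bulge estimate controlling the $L^1$ distance between $E_\gamma$ and $\Pi$ by $\sum_i|\alpha(\gamma,\tt(i),\tt(i+1))|$, together with the elementary planar bound $L(\gamma|_{[\tt(i),\tt(i+1)]})-|\gamma(\tt(i+1))-\gamma(\tt(i))|\gtrsim \alpha(\gamma,\tt(i),\tt(i+1))^2$ on arc-over-chord excess; chaining these through $A(\gamma)=|\Pi|+\sum_i\alpha(\gamma,\tt(i),\tt(i+1))$ transfers $\d_{{\rm hex}}(\Pi)^2$ into $\d_{{\rm hex}}(E_\gamma)^2$. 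The $(1-A(\gamma))$ slack finally comes from the factorisation $P(\Pi)-2(12)^{1/4}|\Pi|=\bigl(P(\Pi)-2(12)^{1/4}\sqrt{|\Pi|}\bigr)+2(12)^{1/4}\sqrt{|\Pi|}(1-\sqrt{|\Pi|})$, using $\sqrt{|\Pi|}(1-\sqrt{|\Pi|})\gtrsim 1-|\Pi|$ under the normalisation $A(\gamma)\ge 1/100$.

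The principal obstacle sits in the $k=6$ case: one must simultaneously track how polygon-stability errors propagate through the truncation in $\sigma$, and how nearly degenerate inscribed hexagons (vertexes clustering, or some arc carrying a very large bulge) are handled without destroying the quadratic character of the slack. Rather than a single hard inequality, the technical heart is a careful case analysis ensuring that every defect in the polygon approximation is matched either by excess arc length, by excess secant area, or by the universal factor coming from $A(\gamma)\le 1$, so that the final slack is genuinely quadratic in the deviation of $E_\gamma$ from a regular hexagon of area $A(\gamma)$.
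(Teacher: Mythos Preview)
Your overall architecture matches the paper's: split by $k$, absorb $k\ne 6$ into a strengthened Hales-type inequality with an $a_2|k-6|$ gain, and for $k=6$ combine a quantitative hexagonal isoperimetric inequality for the inscribed polygon $\Pi_{\gamma,\tt}$ with an arc-over-chord estimate and the factorisation yielding $(1-A(\gamma))$. Both you and the paper use essentially the same three ingredients in the $k=6$ case; the paper's Theorem~\ref{lem: d_hex bound}, its $\arc_1$ expansion (your ``bulge estimate''), and the identity $|\Pi_{\gamma,\tt}|=|E_\gamma|-\sigma(\gamma,\tt)$ are exactly what you describe.

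The genuine gap is in how you arrive at the regime where those ingredients apply. Your quantitative hexagonal stability is stated for \emph{convex} hexagons (and in fact the version the paper uses, Theorem~\ref{lem: d_hex bound}, also requires Hausdorff closeness to a regular hexagon), and your arc-over-chord bound $L(\gamma_i)-\ell_i\gtrsim \alpha_i^2$ needs the chord lengths $\ell_i$ bounded away from zero and the secant areas $\alpha_i$ small. None of this is guaranteed a priori: with only $A(\gamma)\in[1/100,1]$ and $\gamma$ injective, the vertexes $\gamma(\tt(i))$ can cluster, $\Pi_{\gamma,\tt}$ can be nonconvex or degenerate, and some $|\alpha_i|$ can exceed $1/2$ so that the truncation in $\sigma$ bites. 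You flag this as ``the principal obstacle'' and promise a ``careful case analysis'', but you do not supply the mechanism. The paper's mechanism is a \emph{compactness step} (Step one in Section~\ref{section proof of improved hales}): using the characterisation of equality in Hales' inequality, one shows that if the deficit $\delta(\gamma,\tt)=L(\gamma)+(12)^{1/4}\sigma(\gamma,\tt)-2(12)^{1/4}A(\gamma)$ is below some threshold $\delta$, then $E_\gamma$ and $\Pi_{\gamma,\tt}$ are both Hausdorff-$\varepsilon$-close to a unit regular hexagon. This simultaneously forces $\Pi_{\gamma,\tt}$ convex, all $\ell_i$ near $(12)^{1/4}/3$, and all $|\alpha_i|$ small---exactly the regime where your ingredients work. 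When $\delta(\gamma,\tt)\ge\delta$ the inequality is trivial since $\d_{\rm hex}(E_\gamma)^2+(1-A(\gamma))\le 5$. Without this reduction your $k=6$ argument does not close.

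A secondary point: your handling of $k\ne 6$ is thinner than what is actually needed. For $k\ge 7$ with $A(\gamma)$ below the Hales threshold, and for $2\le k\le 5$, you must still control $(12)^{1/4}\sigma(\gamma,\tt)$ on the left, and $\sigma$ can be as negative as $-k/2$; your one-line appeals to ``excess arc length dominating truncation loss'' and ``$p(k)>p(6)$'' do not obviously absorb this. The paper devotes all of Section~\ref{section hales intermed improv} (Theorem~\ref{thm slight improve}) to establishing the required uniform gap $c>0$ for $k\ne 6$, via reductions to circular-arc curves with signed-same secant areas and then a multi-branch numerical analysis of four competing lower bounds on $L(\gamma)$ (isoperimetric, reflected-isoperimetric, Dido, and immersed-polygonal). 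Your sketch is in the right spirit but underestimates the work.
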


\begin{remark}\label{remarks about our improv}
  {\rm {\bf (i):} With respect to assumption \eqref{Agamma hales bound} of Theorem \ref{thm hales HI}, we assume no lower bound on $A(\g)$ when $k\ge 7$, and we weaken the lower bound on $A(\g)\ge 2\,\pi/(\sqrt{3}\,k^2)$ when $2\le k\le 6$ (indeed $2\,\pi/(\sqrt{3}\,(6^2))>1/10$); {\bf (ii):} A more cosmetic than substantial change is the replacement of the term $2\,(12)^{1/4}\,\min\{1,A(\g)\}$ in \eqref{hexagon II hales improv} with the term $2\,(12)^{1/4}\,A(\g)$ in \eqref{hexagon II our improv}. By Remark \ref{remarks about hales improv}-(iii), this change calls for adding the assumption $A(\g)\le 1$; {\bf (iii):} The second and third term on the right-hand side of \eqref{hexagon II our improv} quantify the distance of $(\g,\tt)$ from corresponding to a unit-area regular hexagon (unique equality cases of \eqref{hexagon II hales improv}).}
\end{remark}

\subsection{Organization of the paper} In Section \ref{section proof main}, Theorem \eqref{thm main} is deduced from Theorem \ref{thm quantita hales HI}. After a brief introduction of the $\arc$ function in Section \ref{section arc}, the rest of the paper is thus concerned with the proof of Theorem \ref{thm quantita hales HI}, which is broken down in two sections. In Section \ref{section proof of improved hales} we present the part of the argument making use of quantitative isoperimetry methods, while in Section \ref{section hales intermed improv} we present the part of the argument which follows more closely \cite{hales}.

\medskip

\noindent {\bf Acknowledgement:} MC has been partially supported by the fund ``Analisi di problemi variazionali e differenziali, teoria degli operatori (F.S.R Politecnico di Milano)". MC thanks the financial support of PRIN 2022R537CS ``Nodal optimization, nonlinear elliptic equations, nonlocal geometric problems, with a focus on regularity” funded by the European Union under Next Generation EU. KDM is supported by NSF-DMS RTG 1840314 and the NSF Graduate Research Fellowship Program under Grant DGE 2137420. FM is supported by NSF-DMS RTG 1840314 and NSF-DMS 2247544. This work was partially completed while KDM was hosted at the math department of Politecnico di Milano.

\section{From the quantitative Hales' inequality to the main theorem}\label{section proof main} In this section we show how to deduce Theorem \ref{thm main} from Theorem \ref{thm quantita hales HI}. We shall use ``Dido's inequality": {\it if a curve of length $\ell$ bounds an area $a$ with its chord, then $\ell\ge\sqrt{2\pi\,a}$}. This is easily proved by reflecting the circular arc with respect to the chord and by applying the isoperimetric inequality to find $2\,\ell\ge2\sqrt{\pi}\sqrt{2a}$.

\begin{proof}
  [Proof of Theorem \ref{thm main}] Let $\E\in\C(N,M)$, let $\{E_j^h\}_{j=1}^{N_h}$ be the cells of the chamber $\E(h)$, and let $k_j^h$ be the number of vertexes of $E_j^h$ (see the defining properties (C1)--(C5) of $\C(N,M)$ for the notation and terminology used here). For each cell $E_j^h$ we can find an injective $\g_j^h\in \Lip(\mathbb{S}^1;\R^2)$ and $\tt_j^h\in\P_{k_j^h}$ such that:

  \medskip

  \noindent (i): $\g_j^h(\tt_j^h(i))$ is the $i$-th vertex of $E_j^h$;

  \medskip

  \noindent (ii): if $(h,j)\ne (0,1)$ (and thus $E_j^h$ is not the cell $E_1^0$ of $\E$ with infinite area), then the orientation of $\g_j^h$ can be chosen so that $A(\g_j^h)=|E_j^h|$ and $L(\g_j^h)=P(E_j^h)$;

  \medskip

  \noindent (iii): the orientation of $\g_1^0$ is such that
  \begin{equation}
  \label{the sum}
  \sum_{(h,j)\ne(0,1)}\s(\g_j^h,\tt_j^h)=-\s(\g_1^0,\tt_1^0)\,,
  \end{equation}
  where we are using the cancelations due to opposite orientations along each internal edge of $\E$.

  \medskip

  By (ii), since $P(E_j^h)=L(\g_j^h)$, we notice that
  \begin{equation}
    \label{a0}
      2\,P(\E)=P(E_1^0)+\sum_{(h,j)\ne(0,1)}L(\g_j^h)\,,
  \end{equation}
  and bound from below the two terms on the right-hand side as follows. Concerning $P(E_1^0)$, setting for brevity
  \[
  A_i=\a\left(\gamma_1^0,\tt_1^0(i),\tt_1^0(i+1)\right)\,,
  \]
  (see \eqref{alfa}) and applying Dido's inequality, we find that
  \begin{equation}
    \label{PE10 first}
      \frac{P(E_1^0)}{\sqrt{2\,\pi}}\ge\sum_{i=1}^{k_1^0}\sqrt{|A_i|}
      \ge\sum_{i=1}^{k_1^0}\sqrt{\min\left\{1/2,|A_i|\right\}}
      \ge\sum_{i=1}^{k_1^0}\min\left\{1/2,|A_i|\right\}  \ge|\s(\g_1^0,\t_1^0)|\,.
  \end{equation}
  Assumption \eqref{Agamma our bound} of Theorem \ref{thm quantita hales HI} holds for each $\g_j^h$ with $(h,j)\ne (0,1)$ thanks to property (C5).  We can thus apply \eqref{hexagon II our improv} to each $\g_j^h$ and adding up the results we find
  \begin{eqnarray}\label{a1}
  &&\sum_{(h,j)\ne(0,1)}L(\g_j^h)\ge
  \sum_{(h,j)\ne(0,1)}\left\{
  a_1\,(6-k_j^h)-(12)^{1/4}\,\s(\g_j^h,\tt_j^h)\right\}
  \\\nonumber
  &&+\sum_{(h,j)\ne(0,1)}\,\Big\{2(12)^{1/4}\,|E_j^h|+a_2\,|k_j^h-6|
  +a_3(k_j^h)\,\Big(\d_{{\rm hex}}(E_j^h)^2+\big(1-|E_j^h|\big) \Big)\Big\}\,.
  \end{eqnarray}
  Property (C3) of $\C(N,M)$ ensures that we can repeat Fejes T\'oth's argument to deduce \eqref{euler formula 2}, that is $\sum_{(h,j)\ne(0,1)}
  (6-k_j^h)=6+k_1^0$. Combining this last identity,
  \[
  \sum_{(h,j)\ne(0,1)}|E_j^h|=N+|\E_{\rm void}|
  \]
  (recall \eqref{def of void}), and \eqref{the sum} into \eqref{a1}, and then recalling \eqref{a0}, we find that
  \begin{eqnarray}\nonumber
    2\,P(\E)&\ge&2(12)^{1/4}\,N+2\,(12)^{1/4}\,|\E_{\rm void}|+P(E_1^0)+(12)^{1/4}\,\s(\g_1^0,\t_1^0)+a_1\,(6+k_1^0)
    \\
    \label{a3}
    &&+\sum_{(h,j)\ne(0,1)}\,\Big\{a_2\,|k_j^h-6|+a_3(k_j^h)\,\Big(\d_{{\rm hex}}(E_j^h)^2+\big(1-|E_j^h|\big) \Big)\Big\}\,.
  \end{eqnarray}
  Now, taking into account that $\sqrt{2\pi}-(12)^{1/4}>0$, thanks to \eqref{PE10 first} we find that
  \begin{equation}
    \label{PE10 second}
      P(E_1^0)+(12)^{1/4}\,\s(\g_1^0,\t_1^0)\ge 2\,c_0\,P(E_1^0)\,,\qquad\mbox{if}\qquad c_0=\frac{\sqrt{2\pi}-(12)^{1/4}}{2\,\sqrt{2\pi}}\,.
  \end{equation}
  By combining \eqref{a3} and \eqref{PE10 second} with the low-energy condition $P(\E)\le(12)^{1/4}\,N+M\,\sqrt{N}$, we immediately deduce
  \begin{align}
  &2\,(12)^{1/4}\,|\E_{\rm void}|+a_1\,(6+k_1^0)+c_0\,P(E_1^0) +a_2\, \sum_{(h,j)\ne(0,1)}|k_j^h-6|  \, \le  2M\,\sqrt{N}\,,
  \label{pf1}
  \\
  & \sum_{(h,j)\ne(0,1)} a_3(k_j^h)\,\Big(\d_{{\rm hex}}(E_j^h)^2 +\big(1-|E_j^h|\big)\Big)\, \le  2M\,\sqrt{N}\,.
  \label{pf2}
  \end{align}
  Conclusions \eqref{external perimeter}, \eqref{external edges}, \eqref{interior void}, and \eqref{non hexagonal components} follow immediately from \eqref{pf1} and
  \[
  |k-6|\,\#({\rm Ch}_k(\E))\le \sum_{(h,j)\ne(0,1)}|k_j^h-6|\,.
  \]
  To prove \eqref{new conclusion 1} and \eqref{new conclusion 2}, we begin by noticing that ${\rm Hex}(\E)=I_1\cap I_2$ where
  \begin{eqnarray*}
  &&I_1=\Big\{h: k_j^h=6,\,\forall j\in\{1,...,N_h\}\Big\}\,,
  \\
  &&I_2=\Big\{h:N_h=1\Big\}\,.
  \end{eqnarray*}
  Now, by \eqref{pf1}, setting $I=\{1,...,N\}$, we find
  \[
  \#\,(I\setminus I_1)\le \sum_{\big\{(h,j):k_j^h\ne 6\big\}}|k_j^h-6|\le  \sum_{(h,j)\ne(0,1)}|k_j^h-6| \le C\,M\,\sqrt{N}\,,
  \]
  and, similarly, by \eqref{pf2}, we obtain
  \begin{eqnarray*}
  C\,M\,\sqrt{N}&\ge&\sum_{\big\{(h,j)\ne (0,1):k_j^h=6\big\}}\big(1-|E_j^h|\big)\ge\sum_{h\in I_1}\sum_{j=1}^{N_h}\,\big(1-|E_j^h|\big)
  \\
  &=&\sum_{h\in I_1}\big(N_h-1\big)\ge\#(I_1\setminus I_2)\,.
  \end{eqnarray*}
  We thus conclude that
  \[
  \#{\rm Hex}(\E)=\#(I_1)-\#(I_1\setminus I_2)=\#I-\#\,(I\setminus I_1)-\#(I_1\setminus I_2)\ge N-C\,M\,\sqrt{N}\,,
  \]
  that is \eqref{new conclusion 1}. Of course, \eqref{new conclusion 2} follows immediately from \eqref{new conclusion 1} and \eqref{pf2}.

  \medskip

  Finally, let us assume, by way of contradiction that $k_j^h\ge 6$ for all $(h,j)\ne (0,1)$. In this case, $k_j^h-6$ being non-negative for every $(h,j)\ne (0,1)$, we can go back to \eqref{a1} and apply a version of \eqref{hexagon II our improv} where, in place of $a_1$, an arbitrarily large constant $L$ appears. Correspondingly, in place of \eqref{pf1} we now deduce an inequality that implies, in particular, $6\,L\le M\,\sqrt{N}$. By taking $L$ large enough in terms of $M$ and $N$ we obtain a contradiction.
\end{proof}

\section{The arc function}\label{section arc} Starting from the next section we will make repeated use of the function
\[
\arc(\ell,x)\,,\qquad \ell\ge0\,, x\ge 0\,,
\]
defined as the length of a circular arc subtending a segment of length $\ell$ and bounding a region of area $x$. Clearly $\arc(0,x)=2\,\sqrt\pi\,\sqrt{x}$ is the isoperimetric profile of $\R^2$. By scaling,
\begin{equation}
  \label{arc scaling}
  \arc(\ell,x)=\ell\,\arc\left(1,\frac{x}{\ell^2}\right)\,,\qquad\forall \ell>0\,, x\ge 0\,,
\end{equation}
so that we can directly focus on $\arc_1=\arc(1,\cdot)$. We claim that
\begin{eqnarray}
\label{implicit1}
  &&\mbox{$\arc_1'>0$ on $(0,\infty)$ with $\arc_1(0)=1$, $\arc_1(+\infty)=+\infty$}\,,
  \\
\label{implicit32}
  &&\arc_1'(0)=0\,,\,\, \arc_1''(0)=12\,,
  \\
\label{implicit2}
  &&\mbox{$\arc_1''>0$ on $[0,\pi/8)$ and $\arc_1''<0$ on $(\pi/8,\infty)$}\,.
\end{eqnarray}
We can obtain implicit formulas for $\arc_1$ that can be used in proving \eqref{implicit1}, \eqref{implicit32}, and \eqref{implicit2}. For example, by combining \eqref{arc scaling} with the identity
\begin{equation}
  \label{implicit}
  2\,\theta\,R=\arc\Big(2\,R\,\sin\theta,R^2\left(\theta-\sin\theta\,\cos\theta\right)\Big)\,,\qquad \theta\in[0,\pi/2]\,,R\ge0\,,
\end{equation}
see
\begin{figure}
  \label{fig arc}
  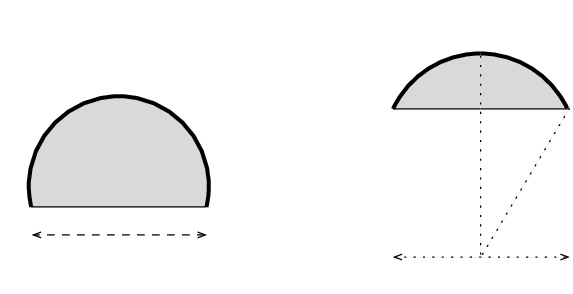
\caption{\small{(a) $\arc(\ell,x)$ is defined as the length of a circular arc (depicted in bold) subtending a chord of length $\ell$ and including a secant area $x$; (b) An implicit formula for $\arc_1$ on the interval $[0,\pi/2]$ can be obtained by referring to this picture. The second argument of $\arc$ in \eqref{implicit} is obtained by subtracting the area of a rectangle with sidelengths $R\,\sin\theta$ and $R\,\cos\theta$ from the area $\theta\,R^2$ of an angular sector whose amplitude equals $2\,\theta$ radians.}}
\end{figure}
Figure \ref{fig arc}, we obtain the following implicit representation\footnote{Notice that $p$ is strictly increasing from $[0,\pi/2]$ to $[0,\pi/8]$.} of $\arc_1$ on the interval $[0,\pi/8]$:
\begin{equation}\label{implicit formula}
\frac{\theta}{\sin\theta}=\arc_1(p(\theta))\,,\qquad\mbox{for}\quad p(\theta)=\frac{\theta-\sin\theta\,\cos\theta}{4\,\sin^2\theta}\,,\,\,\theta\in[0,\pi/2]\,.
\end{equation}
(A similar construction can be used to represent $\arc_1$ on $[\pi/8,\infty)$.) By combining \eqref{implicit formula} with a Taylor expansion we easily prove \eqref{implicit32}. Similarly, we can use \eqref{implicit formula} to prove \eqref{implicit1} and \eqref{implicit2} on $[0,\pi/8]$, and a representation of $\arc_1$ on $[\pi/8,\infty)$ can be used to complete the proof of \eqref{implicit1} and \eqref{implicit2}. Since this approach (although formally correct) is a bit obscure, we prove \eqref{implicit1} and \eqref{implicit2} with the following argument, which seems more transparent. The key remark is that by the variational definition of (mean) curvature (see, e.g., \cite[Remark 17.6]{maggiBOOK}), we have
\begin{equation}
  \label{geometric meaning arc1}
  \begin{split}
    \frac{\pa\arc}{\pa x}(\ell,x)=\,\,&\mbox{curvature of a circular arc}
    \\
    &\mbox{enclosing an area $x$ above a segment of length $\ell$}\,.
  \end{split}
\end{equation}
By \eqref{geometric meaning arc1} (with $\ell=1$) we see that $\arc_1'>0$ on $(0,\infty)$, thus deducing \eqref{implicit1}. Moreover, as $x$ increases from $0$ to $\pi/8$ (with $x=\pi/8$ corresponding to $\theta=\pi/2$ in Figure \ref{fig arc}-(b)), we see from \eqref{geometric meaning arc1} that $\arc_1'$ strictly increases from $0$ to $2$, thus establishing that $\arc_1''>0$ on $(0,\pi/8)$; similarly, as $x$ increases from $\pi/8$ to the limit value $+\infty$, we see from \eqref{geometric meaning arc1} that $\arc_1'$ strictly decreases from $2$ back to the limit value $0$, so that $\arc_1''<0$ on $(\pi/8,\infty)$.

\section{Proof of the quantitative Hales' hexagonal inequality, part one}\label{section proof of improved hales} In this section we begin the proof of Theorem \ref{thm quantita hales HI}. More precisely, we prove Theorem \ref{thm quantita hales HI} {\it conditionally} to the validity of an improvement of Theorem \ref{thm hales HI} (Hales' hexagonal isoperimetric inequality). This improvement of Theorem \ref{thm hales HI} will be established in Section \ref{section hales intermed improv} as Theorem \ref{thm slight improve}, and is based on a refinement of the arguments employed in \cite{hales}. The argument presented in this section, instead, makes use of a quantitative hexagonal isoperimetric inequality proved in \cite[Corollary 2.2]{carocciamaggi} (as an elaboration on \cite[Corollary 1.3]{indreiPOLIGONI}), as stated in Theorem \ref{lem: d_hex bound} below. In the following, given two closed sets $C$ and $K$ in $\R^2$, we denote by
\[
\hd(C,K):=\max\Big\{\sup_{x\in C}{\rm dist}(x,K)\,, \sup_{x\in K}{\rm dist}(x,C)\Big\}\,,
\]
the Hausdorff distance between $C$ and $K$.

\begin{theorem}\label{lem: d_hex bound}
    There exist positive constants $c$ and $\eta$ such that if  $\Pi\subset \mathbb{R}^2$ is a convex hexagon with $\hd(\Pi,H)\leq \eta$ for some regular hexagon $H$, then
    \[
    P(\Pi)-2\,(12)^{1/4}\,\sqrt{|\Pi|}\geq c\, |\Pi\Delta H^*|^2\,,
    \]
    for a regular hexagon $H^*$ with $|H^*|=|\Pi|$.
\end{theorem}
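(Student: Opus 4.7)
The plan is a Fuglede-type quantitative stability argument. First, by scaling we may normalize $|\Pi|=1$, so that every competitor $H^*$ is a unit-area regular hexagon obtained from a fixed reference $H_0$ by a translation and a rotation. The Hausdorff smallness of $\Pi$ with respect to $H$, combined with convexity, allows us to choose $H^*$ so as to minimize $|\Pi\Delta H^*|$ over all such isometric copies of $H_0$, and to write the six vertices of $\Pi$ as $v_i=w_i+\varepsilon_i$, where the $w_i$ are the vertices of $H^*$ and $|\varepsilon_i|\le C\eta$ for $i=1,\dots,6$.

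The isoperimetric deficit $\delta(\Pi):=P(\Pi)-2(12)^{1/4}\sqrt{|\Pi|}$ is then a smooth function of $\varepsilon=(\varepsilon_1,\dots,\varepsilon_6)\in\R^{12}$, and admits a Taylor expansion
\[
\delta(\Pi)=L(\varepsilon)+Q(\varepsilon)+O(|\varepsilon|^3)\,,
\]
whose linear part $L$ vanishes because $H^*$ is a critical point of $P-\lambda\,|\cdot|$ (the regular hexagon being the unique minimizer of $P$ at fixed area among hexagons by the $k=6$ case of \eqref{isoperimetric inequality kgons}, with Lagrange multiplier $\lambda$ equal to the curvature of the inscribed circle). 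The quadratic form $Q$ is the constrained Hessian of $P$ at $H^*$. The central step is to prove that $Q$ is positive definite on the quotient $\R^{12}/K$, where $K\subset\R^{12}$ is the three-dimensional kernel generated by the infinitesimal translations and the infinitesimal rotation of $H^*$; on the admissible subspace singled out by the optimal choice of $H^*$ (which kills the $K$-component of $\varepsilon$) one expects $Q(\varepsilon)\ge c_0\,|\varepsilon|^2$ for some explicit $c_0>0$.

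Once coercivity is established, choosing $\eta$ so small that the $O(|\varepsilon|^3)$ remainder is dominated by $(c_0/2)\,|\varepsilon|^2$ gives $\delta(\Pi)\ge (c_0/2)\,|\varepsilon|^2$. A direct geometric comparison yields the converse-direction bound $|\Pi\Delta H^*|\le C|\varepsilon|$, as the symmetric difference is a union of thin triangular strips along the edges of $H^*$ of transverse size $\lesssim|\varepsilon_i|$ and length $O(1)$. Combining the two estimates and undoing the initial scaling produces the claimed inequality $\delta(\Pi)\ge c\,|\Pi\Delta H^*|^2$.

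The main obstacle is the coercivity step: verifying that the translation-rotation invariance accounts for \emph{all} of the kernel of $Q$, i.e.\ that no spurious zero mode survives the area constraint, requires diagonalizing an explicit symmetric matrix equivariant under the dihedral symmetry group of $H^*$, a computation which is mechanical but not trivial. This is precisely the point at which the specific choice of a hexagonal reference is used, and explains why a dedicated proof is recorded in \cite{indreiPOLIGONI} (for general convex polygons) and adapted to the hexagonal setting in \cite{carocciamaggi}.
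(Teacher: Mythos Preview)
Your outline is a correct sketch of a Fuglede-type argument, which is indeed the strategy behind the references you cite at the end. The paper's own proof, however, takes the short route: it simply invokes \cite[Corollary~2.2]{carocciamaggi}, which already gives the stronger inequality
\[
P(\Pi)-2(12)^{1/4}\sqrt{|\Pi|}\ge c'\,\hd(\partial\Pi,\partial H^*)^2\,,
\]
and then converts to the symmetric-difference form via the elementary bound $|\Pi\Delta H^*|\le C\,\hd(\partial\Pi,\partial H^*)$ (together with $\hd(\partial\Pi,\partial H)=\hd(\Pi,H)$ for convex sets). So you are unpacking the cited black box rather than taking a genuinely different route---as you yourself acknowledge in your final paragraph.

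One small correction to your sketch: the kernel of the \emph{unconstrained} Hessian of $\delta=P-2(12)^{1/4}\sqrt{|\cdot|}$ at a regular hexagon is four-dimensional, not three, since $\delta$ also vanishes identically along dilations (regular hexagons of every size are minimizers). It is the area matching $|H^*|=|\Pi|$ that removes this extra zero mode, the dilation direction being transverse to the constraint surface; your write-up blends the constrained and unconstrained viewpoints a bit. The claim that the optimal choice of $H^*$ ``kills the $K$-component of $\varepsilon$'' also needs a line of justification, since first-order optimality for $H^*\mapsto|\Pi\Delta H^*|$ is not literally the orthogonality condition $\varepsilon\perp K$; the standard fix is to decompose $\varepsilon=\varepsilon_K+\varepsilon_\perp$, use coercivity on $\varepsilon_\perp$, and absorb $\varepsilon_K$ into a re-choice of $H^*$ a posteriori.
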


\begin{proof}[Proof of Theorem \ref{lem: d_hex bound}]
  By \cite[Corollary 2.2]{carocciamaggi} (which is stated on the two-dimensional flat torus, but obviously holds on the plane too) there exist positive constants $c'$ and $\eta$ such that if $\Pi\subset \mathbb{R}^2$ is a convex hexagon with $\hd(\partial \Pi, \partial H)\leq \eta$ for some regular hexagon $H$, then, for a regular hexagon $H^*$ with $|H^*|=|\Pi|$,
  \[
  P(\Pi)-2\,(12)^{1/4}\,\sqrt{|\Pi|}\geq c'\, \hd(\partial \Pi, \partial H^*)^2\,.
  \]
  Since $\hd(\partial \Pi, \partial H)=\hd(\Pi, H)$, we conclude the proof by observing that, for a positive constant $C$ (independent of $\Pi$), it holds $|\Pi\Delta H^*|\le C\,\hd(\partial \Pi, \partial H^*)$.
\end{proof}

\begin{proof}[Proof of Theorem \ref{thm quantita hales HI}, part one] We prove Theorem \ref{thm quantita hales HI} conditionally to the validity of Theorem \ref{thm slight improve} (that will be proved in Section \ref{section hales intermed improv}). Theorem \ref{thm slight improve} asserts the following:

\medskip

{\it There exist positive constants $a_1$ and $a_2$ with the following property. If $k\ge 2$, $(\gamma,\tt)\in  \Lip(\mathbb{S}^1;\R^2)\times \mathcal{P}_k$, and}
\begin{equation}
  \label{k condition x}
  A(\g)>0\quad\mbox{if $k=6$}\,,\qquad A(\gamma)\geq \frac1{100}\quad\mbox{if $2\le k\le 5$}\,,
\end{equation}
{\it then}
\begin{equation}
  \label{hales inequality x}
L(\g)+a_1\,(k-6)+(12)^{1/4}\,\s(\g,\tt)\ge 2\,(12)^{1/4}\,\min\{1,A(\g)\}+a_2\,|k-6|\,,
\end{equation}
{\it with equality if and only if $(\gamma,\tt)$ corresponds to a unit-area regular hexagon.}

\medskip

This statement implies Theorem \ref{thm quantita hales HI} when $k\ne 6$. Therefore, for the remaining part of the proof, we shall assume that $k=6$. We thus want to prove the existence of a positive constant $a_3$  such that if $(\gamma,\tt)\in \Lip(\mathbb{S}^1,\R^2)\times \P_6$, $\g$ is injective, and
\begin{equation}
\label{hales assumption 6}
1\ge A(\gamma)\ge\frac1{100}\,,
\end{equation}
then
\begin{eqnarray}
\label{hales inequality improved hexagons}
L(\g)+ (12)^{1/4}\,\s(\g,\tt)\ge 2\,(12)^{1/4}\,A(\g)+a_3\,\Big\{\d_{\mathrm{hex}}(E_{\gamma})^2+\big(1-A(\gamma)\big)\Big\}\,,
\end{eqnarray}
where $E_\g$ denotes the bounded connected component of $\R^2\setminus\g(\mathbb{S}^1)$ identified by $\g$ (thanks to Jordan's theorem).

\medskip

\noindent {\it Step one}: In this step we set, for $(\g,\tt)\in \Lip(\mathbb{S}^1;\R^2)\times \mathcal{P}_6$,
\begin{equation}\label{deficit}
\de(\gamma,\tt)=L(\gamma)
    +(12)^{1/4}\,\s(\gamma,\tt)- 2(12)^{1/4}\,A(\g)\,,
\end{equation}
and prove that for every $\e>0$ there is $\de>0$ such that, if $\de(\g,\tt)\le\de$, then there is a unit-area, regular hexagon $H_0$ with
\begin{equation}\label{eqn:DistSmall}
    \max\Big\{\hd(E_{\gamma},H_0),\hd(\Pi_{\g,\tt},H_0),|E_\g\Delta \Pi_{\g,\tt}|\Big\}  \leq \e\,,
\end{equation}
where $\Pi_{\g,\tt}$ is the convex envelope of $\{\gamma(\tt(i))\}_{i=1}^6$. Notice, in particular, that if $\e$ is small enough, then \eqref{eqn:DistSmall} implies that $\Pi_{\g,\tt}$ is a convex hexagon.

\medskip

We prove this arguing by contradiction, aiming at using our assumption that, under \eqref{k condition x}, \eqref{hales inequality x} holds, and thus $\de(\gamma,\tt)\ge0$ with equality if and only if $k=6$ and $(\g,\tt)$ corresponds to a unit-area, regular hexagon.

\medskip

We thus consider $\e_*>0$ and a sequence $\{(\gamma_j,\tt_j)\}_{j}$ in $\Lip(\mathbb{S}^1;\R^2)\times \mathcal{P}_6$ with $1\ge A(\g_j)\ge 1/100$, $\de(\g_j,\tt_j)\to 0^+$ as $j\to\infty$, and
\begin{equation}
\label{Eh}
\max\left\{\inf_H\left\{\max\{\hd(E_{\gamma_j},H)\,,\hd(\Pi_{\g_j,\tt_j},H)\}\right\},|E_{\gamma_j}\Delta \Pi_{\gamma_j,\tt_j}|\right\}\ge\e_*\,,\qquad\forall j\,.
\end{equation}
(Here $H$ denotes a generic unit-area, regular hexagon.) Up to a reparametrization we can assume that
\begin{equation}
\label{constant speed}
|\g_j'|=\frac{L(\g_j)}{2\,\pi}\qquad\mbox{on $\mathbb{S}^1$}\,.
\end{equation}
By $|\sigma(\gamma_j,\tt_j)|\le1/2$, $A(\g_j)\le1$, and $\de(\g_j,\tt_j)\to 0$ we find that $\ell=\sup_jL(\g_j)<\infty$. By \eqref{constant speed}, up to extracting subsequences and up to translations, there is $\g\in\Lip(\mathbb{S}^1;\R^2)$ such that $\g_j\to\g$ uniformly on $\mathbb{S}^1$ and $\g_j'\weakstar\g'$ in $L^\infty(\mathbb{S}^1;\R^2)$: in particular,
\begin{equation}
  \label{Agj to Ag}
  A(\g_j)=\int_{\mathbb{S}^1}\gamma_j^{(1)}(\gamma_j')^{(2)}\to \int_{\mathbb{S}^1}\gamma^{(1)}(\gamma')^{(2)}=A(\g)
\end{equation}
as $j\to\infty$ (here we are denoting by $x^{(i)}$ the components of $x\in\R^2$), and from the corresponding bounds on $A(\g_j)$ we find that
\[
1\ge A(\g)\ge \frac1{100}\,.
\]
Now, up to extracting subsequences, there is $\{s_i:1\le i\le 6\}\subset\mathbb{S}^1$ such that, as $j\to\infty$, $\tt_j(i)\to s_i$ for each $i=1,...,6$. In particular, for each $i=1,...6$, and setting $s_7=s_1$, we have $s_i\le s_{i+1}$ in the ordering of $\mathbb{S}^1$. To compute the limit of $\s(\g_j,\tt_j)$ we start noticing that
\begin{equation}
  \label{formula A}
\a\left(\g_j,\tt_j(i),\tt_j(i+1)\right)=\int_{[\tt_j(i),\tt_j(i+1)]}\g_j^{(1)}\,(\g_j')^{(2)}
+A\left(\ll\g_j(\tt_j(i+1)),\g_j(\tt_j(i))\rr\right)\,.
\end{equation}
Starting from \eqref{formula A} and thanks to the fact that $\g_j\to\g$ uniformly on $\mathbb{S}^1$ and $\g_j'\weakstar\g'$ in $L^\infty(\mathbb{S}^1;\R^2)$, we see that if $s_i<s_{i+1}$, then
\[
\lim_{j\to\infty}\a\left(\g_j,\tt_j(i),\tt_j(i+1)\right)=\a(\g,s_i,s_{i+1})\,;
\]
while if $s_i=s_{i+1}$ but there is at least one $\ell\ne i$ such that $s_\ell\ne s_{\ell+1}$, then
\begin{equation}
  \label{previous}
  \lim_{j\to\infty}\a\left(\g_j,\tt_j(i),\tt_j(i+1)\right)=0\,;
\end{equation}
and, finally, if $s_i=s_{i+1}$ for all $i$, then there is $i_0$ such that \eqref{previous} holds for all $i\ne i_0$, and
\[
\lim_{j\to\infty}\a\left(\g_j,\tt_j(i_0),\tt_j(i_0+1)\right)=A(\g)\,.
\]
By using these three facts we conclude that, either $s_i=s_{i+1}$ for all $i$ and
\begin{equation}
  \label{either1}
  \lim_{j\to\infty}\s(\g_j,\tt_j)=\min\Big\{\frac12,A(\g)\Big\}\,,
\end{equation}
or there exists $2\le k\le 6$ and $\tt\in\P_k$ such that
\begin{equation}
  \label{either2}
\lim_{j\to\infty}\s(\g_j,\tt_j)=\s(\g,\tt)\,.
\end{equation}
If we are in the case when \eqref{either1} holds, then we deduce from the isoperimetric inequality, $\de(\g_j,\tt_j)\ge0$, \eqref{Agj to Ag}, and \eqref{either1} that
\begin{eqnarray}\nonumber
2\,\sqrt\pi\,\sqrt{A(\g)}&\le&L(\g)\le\liminf_{j\to\infty}L(\g_j)\le
\lim_{j\to\infty}2\,(12)^{1/4}\,A(\g_j)-(12)^{1/4}\,\s(\gamma_j,\tt_j)
\\\label{oddio}
&=&2\,(12)^{1/4}\,A(\g)-(12)^{1/4}\,\min\Big\{\frac12,A(\g)\Big\}\,.
\end{eqnarray}
Now, the function
\[
F(x)=2\,\sqrt{\pi\,x}-2\,(12)^{1/4}\,x+(12)^{1/4}\,\min\{x,1/2\}\,,
\]
is concave on $[0,1]$, with $F(0)=0$ and $F(1)=2\,\sqrt{\pi}-(12)^{1/4}>0$, so that \eqref{oddio} implies $A(\g)=0$ (a contradiction, as $A(\g)\ge 1/100$). Therefore \eqref{either1} never occurs, and by combining \eqref{Agj to Ag} with \eqref{either2}, $\de(\g_j,\tt_j)\ge0$, and $\de(\g_j,\tt_j)\to 0$ as $j\to\infty$ we conclude that $\de(\g,\tt)=0$ for $\tt$ as in \eqref{either2}. By our assumption, $\de(\g,\tt)=0$ implies that $k=6$, $\g$ is a monotone parametrization of the boundary of a regular, unit-area hexagon, $\{\g(\tt(i))\}_{i=1}^6$ are the vertexes of such hexagon, and $\tt_j(i)\to\tt(i)$ for $1\le i\le 6$ as $j\to\infty$. As $\g_j\to\g$ uniformly on $\mathbb{S}^1$ we conclude that
\[
\max\Big\{\inf_H\left\{\max\{\hd(E_{\gamma_j},H)\,,\hd(\Pi_{\g_j,\tt_j},H)\}\right\},|E_{\gamma_j}\Delta \Pi_{\gamma_j,\tt_j}|\Big\}\to 0\,,
\]
as $j\to\infty$, in contradiction with \eqref{Eh}.

\medskip

\noindent  {\it Step two}: We conclude the proof of Theorem \ref{thm quantita hales HI}. Let $(\gamma,\tt)\in \Lip(\mathbb{S}^1,\R^2)\times \P_6$ be such that $\g$ is injective and $1\ge A(\g)\ge 1/100$, and let $E_\g$ be the connected component of $\R^2\setminus\g(\mathbb{S}^1)$ identified by Jordan's theorem (so that $|E_\g|=A(\g)$). We want to prove the existence of $a_3>0$ such that
\begin{eqnarray}
\label{hales inequality improved hexagons proof}
\de(\g,\tt)\ge a_3\,\Big\{\d_{\mathrm{hex}}(E_{\gamma})^2+\big(1-A(\gamma)\big)\Big\}\,.
\end{eqnarray}
Since $\d_{\mathrm{hex}}(E_{\gamma})<2\,|E_\g|\le 2$, if $\de(\g,\tt)\ge\de$ for some constant $\de>0$, then \eqref{hales inequality improved hexagons proof} holds for every $a_3\le\de/5$. For $\e>0$ to be chosen later, we select $\de>0$ depending on $\e$ as determined in step one, and reduce to the situation when there is a unit-area, regular hexagon $H_0$ such that
\begin{equation}
  \label{first red}
  \max\Big\{\hd(E_\g,H_0),\hd(\Pi_{\g,\tt},H_0),|E_\g\Delta\Pi_{\g,\tt}|\Big\}<\e\,,\qquad \mbox{$\Pi_{\g,\tt}$ is a convex hexagon}\,.
\end{equation}
If we denote by $x_i$ the total area enclosed between the $i$-th (curvilinear) edge of $E_\g$ and the corresponding $i$-th edge of $\Pi_{\g,\tt}$, and by $\ell_i$ the length $|\g(\tt(i+1))-\g(\tt(i))|$ of the $i$-th edge of $\Pi_{\g,\tt}$, then by \eqref{first red} we have that, for some positive constant $C$,
\begin{equation}
\label{omega}
\sum_{i=1}^6x_i+\max_{1\le i\le 6}\Big|\ell_i-\frac{(12)^{1/4}}3\Big|<C\,\e\,,
\end{equation}
where of course $(12)^{1/4}/3$ is the length of one edge of a unit-area, regular hexagon. Since $(12)^{1/4}/3<1$, provided $\e$ is small enough and by \eqref{omega}, we deduce that $\max_i\ell_i<1$. In particular, using the scaling property \eqref{arc scaling} of $\arc_1$, and the fact that $\arc_1$ is increasing on $[0,\infty)$ (recall \eqref{implicit1}), we find that
\begin{eqnarray*}
  L(\g)\ge\sum_{i=1}^6\arc(\ell_i,x_i)=\sum_{i=1}^6\arc_1\Big(\frac{x_i}{\ell_i^2}\Big)
  \ge\sum_{i=1}^6\ell_i\,\arc_1\Big(\frac{x_i}{\ell_i}\Big)\,.
\end{eqnarray*}
By \eqref{omega}, up to further decreasing the value of $\e$, we can ensure that $\max_i x_i/\ell_i\le \pi/8$, and then combine the convexity of $\arc_1$ on $[0,\pi/8]$ (recall \eqref{implicit2}) with Jensen's inequality to conclude that
\begin{eqnarray}\label{chordal isoperimetric inequality}
  L(\g)\ge\sum_{i=1}^6\ell_i\,\arc_1\Big(\frac{x_i}{\ell_i}\Big)
  \ge\Big(\sum_{i=1}^6\,\ell_i\Big)\,\,\arc_1\bigg(\frac{\sum_{i=1}^6x_i}{\sum_{i=1}^6\,\ell_i}\bigg)
  =P(\Pi_{\g,\tt})\,\arc_1\Big(\frac{|E_\g\Delta\Pi_{\g,\tt}|}{P(\Pi_{\g,\tt})}\Big)\,.
\end{eqnarray}
(Inequality \eqref{chordal isoperimetric inequality} is strictly related to the {\it chordal isoperimetric inequality} found in \cite[Proposition 6.1-A]{hales} and \cite[15.5]{Morgan}, although it does not seem to exactly fit in those statements.) Now, by $\arc_1(0)=1$, $\arc_1'(0)=0$, and $\arc_1''(0)=12$ (recall \eqref{implicit1} and \eqref{implicit32}), we can find a positive constant $C$ such that
\begin{equation}
\label{arc coercivo}
\arc_1(x)\ge 1+6\,x^2-C\,|x|^3\,,\qquad\forall x\in[0,1/2]\,.
\end{equation}
Since we have $|E_\g\Delta\Pi_{\g,\tt}|\to 0$ and $P(\Pi_{\g,\tt})\to P(H_0)=2\,(12)^{1/4}$ in the limit $\e\to 0^+$, by further decreasing the value of $\e$, thanks to \eqref{omega}, we can ensure that $|E_\g\Delta\Pi_{\g,\tt}|<P(\Pi_{\g,\tt})/2$, and thus deduce from \eqref{chordal isoperimetric inequality}, \eqref{arc coercivo}, and \eqref{first red} that
\begin{equation}
\label{chordal isoperimetric inequality x}
L(\g)\ge P(\Pi_{\g,\tt})+\left(\frac{6}{2\,(12)^{1/4}}-C\,\e\right)\,|E_\g \Delta\Pi_{\g,\tt}|^2\,.
\end{equation}
For $\eta>0$ as in Theorem \ref{lem: d_hex bound}, up to further decreasing $\e$ so to entail that $\hd(\Pi_{\g,\tt},H_0)<\eta$, and thanks to the convexity of $\Pi_{\g,\tt}$, we can combine Theorem \ref{lem: d_hex bound} with \eqref{chordal isoperimetric inequality x} to find that, for some positive constant $C$,
\begin{equation}
\label{proof1}
L(\g)\ge2\,(12)^{1/4}\,\sqrt{|\Pi_{\g,\tt}|}+\frac{|\Pi_{\g,\tt}\Delta H_*|^2}{C}
+\left(\frac{3}{(12)^{1/4}}-C\,\e\right)\,|E_\g \Delta\Pi_{\g,\tt}|^2\,,
\end{equation}
where $H_*$ is a regular hexagon with $|H_*|=|\Pi_{\g,\tt}|$. Now, let us recall that, by \eqref{hales assumption 6}, $A(\g)>0$, so that, in particular, the orientation of $\g$ is such that $A(\g)=|E_\g|$. Denoting by $\circ$ the operation of concatenating curves, we see that
\[
\pi_{\g,\tt}=\ll\g(\tt(1)),\g(\tt(2))\rr\,\circ\,\ll\g(\tt(2)),\g(\tt(3))\rr\,\circ\,\cdots\,\circ\,\ll\g(\tt(6)),\g(\tt(1))\rr\,,
\]
defines a Lipschitz map $\pi_{\g,\tt}\in{\rm Lip}(\mathbb{S}^1;\R^2)$ that, thanks to \eqref{first red}, maps injectively $\mathbb{S}^1$ into $\pa\Pi_{\g,\tt}$, and in such a way that $A(\pi_{\g,\tt})=|\Pi_{\g,\tt}|$. By definition of $A$, $\pi_{\g,\tt}$, and of the secant oriented area functional $\a$ (recall \eqref{alfa}), we thus find that
\begin{eqnarray*}
|E_\g|-|\Pi_{\g,\tt}|&=&\sum_{i=1}^kA\big(\g|_{[\tt(i),\tt(i+1)]}\big)-\sum_{i=1}^kA\big(\ll\g(\tt(i)),\g(\tt(i+1))\rr\big)
\\
&=&\sum_{k=1}^6\a(\g,\tt(i),\tt(i+1))\,,
\end{eqnarray*}
so that, up to decreasing the value of $\e$, so to enforce from \eqref{first red} that $|\a(\g,\tt(i),\tt(i+1))|\le|E_\g\Delta\Pi_{\g,\tt}|<1/2$, we conclude that
\begin{equation}
  \label{important}
  |\Pi_{\g,\tt}|=|E_\g|-\s(\g,\tt)\,,
\end{equation}
Aiming at providing an efficient lower bound for the first term on the right-hand side of \eqref{proof1}, we use \eqref{important} to find that
\begin{eqnarray}\nonumber
  \sqrt{|\Pi_{\g,\tt}|}&=&\sqrt{|E_\g|}\,\sqrt{\frac{|\Pi_{\g,\tt}|}{|E_\g|}}
  =|E_\g|\,\sqrt{1-\frac{\s(\g,\tt)}{|E_\g|}}+\Big(\sqrt{|E_\g|}-|E_\g|\Big)\,\sqrt{\frac{|\Pi_{\g,\tt}|}{|E_\g|}}
  \\
  &=&
  |E_\g|\,\sqrt{1-\frac{\s(\g,\tt)}{|E_\g|}}+\frac{1-|E_\g|}{1+\sqrt{|E_\g|}}\,\,\sqrt{|\Pi_{\g,\tt}|}
  \\\label{easy}
  &\ge& |E_\g|\,\sqrt{1-\frac{\s(\g,\tt)}{|E_\g|}}+\frac{1-|E_\g|}{C}
\end{eqnarray}
where we have decreased $\e$ to ensure $|\Pi_{\g,\tt}|\ge 1/2$, and where $C$ is a positive constant. By means of the Taylor expansion we find that
\begin{equation}
  \label{radice}
  \sqrt{1-s}= 1-\frac{s}2-\frac{s^2}8+{\rm O}(s^3)\,,\qquad\mbox{as $s\to 0^+$}\,,
\end{equation}
and use \eqref{radice} with \eqref{easy} to bound from below the first and the third term on the right hand side of \eqref{proof1} as
\begin{eqnarray}\label{proof4}
&&2\,(12)^{1/4}\,\sqrt{|\Pi_{\g,\tt}|}+\Big(\frac3{(12)^{1/4}}-C\,\e\Big)\,|E_\g \Delta\Pi_{\g,\tt}|^2
\\\nonumber
&&\ge2\,(12)^{1/4}\,|E_\g|-(12)^{1/4}\,\s(\g,\tt)+\frac{1-|E_\g|}{C}
\\\nonumber
&&-2\,(12)^{1/4}\,\Big(\frac18-C\,\e\Big)\,\s(\g,\tt)^2+\Big(\frac3{(12)^{1/4}}-C\,\e\Big)
\,|E_\g \Delta\Pi_{\g,\tt}|^2\,.
\end{eqnarray}
Therefore, up to further decreasing the value of $\e$, we find that
\[
|\s(\g,\tt)|\le|E_\g\Delta\Pi_{\g,\tt}|\,,\qquad\mbox{and}\qquad \frac3{(12)^{1/4}}>\frac{2\,(12)^{1/4}}8\,,
\]
we finally conclude from \eqref{proof1} and \eqref{proof4} that
\[
L(\g)\ge2\,(12)^{1/4}\,|E_\g|+\frac{|\Pi_{\g,\tt}\Delta H_*|^2}{C}
-(12)^{1/4}\,\s(\g,\tt)+\frac{|E_\g\Delta\Pi_{\g,\tt}|^2+(1-|E_\g|)}{C}\,,
\]
that is
\begin{equation}
\label{fine davvero}
  C\,\de(\g,\tt)\ge |\Pi_{\g,\tt}\Delta H_*|^2+|E_\g\Delta\Pi_{\g,\tt}|^2+(1-|E_\g|)\,.
\end{equation}
Since $\d_{{\rm hex}}(E_\g)\le C\,(|\Pi_{\g,\tt} \Delta H^*|+|E_\g \Delta\Pi_{\g,\tt} |)$, we easily see that \eqref{fine davvero} implies \eqref{hales inequality improved hexagons}.
\end{proof}

\section{Proof of the quantitative Hales' hexagonal inequality, part two}\label{section hales intermed improv} Thanks to the argument presented in Section \ref{section proof of improved hales}, in order to complete the proof of Theorem \ref{thm quantita hales HI}, and thus of Theorem \ref{thm main}, we are left to prove the following theorem.

\begin{theorem}\label{thm slight improve}
There exist positive constants $a_1$ and $a_2$ with the following property. If $k\ge 2$, $(\gamma,\tt)\in  \Lip(\mathbb{S}^1;\R^2)\times \mathcal{P}_k$, and
\begin{equation}
  \label{k condition}
  A(\g)>0\quad\mbox{if $k=6$}\,,\qquad A(\gamma)\geq \frac1{100}\quad\mbox{if $2\le k\le 5$}\,,
\end{equation}
then
\begin{equation}
  \label{hales inequality x proof}
L(\g)+a_1\,(k-6)+(12)^{1/4}\,\s(\g,\tt)\ge 2\,(12)^{1/4}\,\min\{1,A(\g)\}+a_2\,|k-6|\,,
\end{equation}
with equality if and only if $(\gamma,\tt)$ corresponds to a unit-area regular hexagon.
\end{theorem}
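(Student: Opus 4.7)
The plan is to prove Theorem \ref{thm slight improve} by adapting Hales' original argument from \cite{hales}, making two quantitative refinements: extracting the gap $a_2\,|k-6|$ on the right-hand side, and accommodating the weaker lower bound on $A(\gamma)$ in \eqref{k condition}.

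For the first refinement, the key observation is that Hales' constant $a_H=0.0505$ lies strictly inside the admissible interval $(a(7),a(5))$, and Hales' inequality should persist as $a$ is perturbed inside this interval. Indeed, at the polygonal level (Fejes T\'oth's reduction inside the argument), the inequality for a regular unit-area $k$-gon holds as soon as $a\ge a(k)$ for $k>6$ and $a\le a(k)$ for $k<6$, and the passage from polygons to general Lipschitz curves in Hales' proof rests on estimates whose numerical thresholds depend continuously on $a$. I would therefore run Hales' argument twice: with a slightly smaller constant $a_+\in (a(7),a_H)$ on the sub-range $k>6$, and with a slightly larger constant $a_-\in (a_H,a(5))$ on $k<6$. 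Setting $a_1=(a_++a_-)/2$ and $a_2=(a_--a_+)/2>0$, the two resulting inequalities rewrite exactly as \eqref{hales inequality x proof}; the case $k=6$ is automatic, since both $a_1(k-6)$ and $a_2|k-6|$ vanish and the statement reduces to Hales' original inequality.

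For the second refinement I would treat three regimes separately. When $k\ge 7$, no lower bound on $A(\gamma)$ is assumed, but the slack $a_+(k-6)\ge a_+$ on the left-hand side is a fixed positive constant that, together with the planar isoperimetric bound $L(\gamma)\ge 2\sqrt{\pi\,|A(\gamma)|}$ and the trivial bound $|\sigma(\gamma,\mathbf{t})|\le k/2$, compensates for any small or negative value of $A(\gamma)$ (the bound $2(12)^{1/4}\min\{1,A\}$ becoming non-positive as $A\le 0$). When $2\le k\le 5$, the residual range $A(\gamma)\in [1/100,\,2\pi/(\sqrt{3}k^2))$ is a finite union of compact intervals, on which the same isoperimetric/trivial-$\sigma$ combination verifies the inequality, possibly at the cost of a small reduction of $a_2$. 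When $k=6$, the residual range $A(\gamma)\in (0,\,2\pi/(\sqrt{3}\cdot 36))$ is the most delicate: here $L(\gamma)\ge 2\sqrt{\pi\,A(\gamma)}$ dominates $2(12)^{1/4}A(\gamma)$ for small $A(\gamma)$, while the Dido-type bound $|\alpha(\gamma,s,t)|\le L(\gamma|_{[s,t]})^2/\pi$ (proved by reflecting across the chord and applying the planar isoperimetric inequality) forces each truncated summand in $\sigma(\gamma,\mathbf{t})$ to be small whenever the local length is small, closing the gap.

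The main obstacle I expect is the bookkeeping required to verify that Hales' intricate case analysis---which involves numerical inequalities on the lengths and turning angles of individual arcs, the convexity of auxiliary one-dimensional functions, and a continuous deformation argument toward a regular hexagon---tolerates the perturbation of $a_H$ within $(a(7),a(5))$ uniformly across $k$. This will form the bulk of the work and is essentially a careful rewriting of \cite{hales} tracking $a$ as a parameter. The equality case is then inherited from Hales' theorem: for $k\ne 6$ the strict slack $a_2|k-6|$ rules out equality, while for $k=6$ the inequality reduces to Hales', whose equality case is the unit-area regular hexagon.
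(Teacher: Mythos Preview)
Your high-level strategy for extracting the term $a_2|k-6|$ is sound and essentially equivalent to what the paper does: the paper proves, for a \emph{single} value $a=3/50$, the existence of a uniform gap $c>0$ for all $k\ne 6$, and then sets $a_2=c/8$, $a_1=a+a_2$. Your two-values-of-$a$ trick is just an algebraic repackaging of this.

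However, your treatment of the residual range for $k\ge 7$ has a genuine gap. You write that the slack $a_+(k-6)$, together with the isoperimetric bound $L(\gamma)\ge 2\sqrt{\pi|A(\gamma)|}$ and the trivial bound $|\sigma(\gamma,\mathbf{t})|\le k/2$, suffices. It does not: in the residual range $A(\gamma)$ is small, so the isoperimetric bound gives almost no control on $L$, while $(12)^{1/4}\sigma$ can be as negative as $-(12)^{1/4}k/2$, and since $(12)^{1/4}/2\approx 0.93$ dwarfs any admissible $a_+<a(5)\approx 0.077$, the combination $a_+(k-6)-(12)^{1/4}k/2\to -\infty$ as $k\to\infty$. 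What is missing is precisely the Dido bound you invoke only for $k=6$: $L(\gamma_i)\ge\sqrt{2\pi|A_i|}$ on each arc, which forces $L$ to be large whenever $\sigma$ is very negative. The paper uses exactly this (see the bound labeled ``Dido'' and the ``Negative Arcs'' bound obtained by reflecting all inward arcs and applying isoperimetry to the reflected curve).

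More broadly, the paper does not argue by perturbation of Hales' numerical constants; it carries out a self-contained analysis. After reducing to circular arcs with $|A_i|\le 1/2$ of constant sign (your proposal omits these reductions entirely), the problem becomes a two-variable inequality in $(x,y)=(A(\gamma),\sigma(\gamma,\mathbf{t}))$, and the paper bounds $L(\gamma)$ from below by the maximum of four explicit functions of $(x,y)$: the isoperimetric bound, the reflected-arcs isoperimetric bound, Dido summed over arcs, and the isoperimetric inequality for \emph{immersed} $k$-gons applied to the chord polygon. The resulting explicit functions $f_{k,1},\dots,f_{k,4}$ are then checked, region by region in $(x,y)$, to satisfy the claimed lower bound for each $k$. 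Your plan to ``track $a$ as a parameter through Hales' case analysis'' would, if carried out, converge to something very close to this; but as written, the concrete ingredients (the four lower bounds, the reductions on the $A_i$, and the case-by-case verification) are absent, and the one place where you do sketch an argument ($k\ge 7$) uses the wrong bound.
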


Theorem \ref{thm slight improve} improves on Hales' hexagonal isoperimetric inequality (Theorem \ref{thm hales HI}) since it requires no lower bound on $A(\g)$ when $k\ge 6$ and weaker ones when $2\le k\le 5$, and since, when $k\ne 7$, it provides the additional lower bound $a_2\,|k-6|$. These improvements are obtained by carefully revisiting Hales' original argument, which is why the methods of this section overlap with those of \cite{hales}.

\begin{proof}
[Proof of Theorem \ref{thm slight improve}] We make the following claim:

\bigskip

\noindent {\it Claim}: There are positive constants $a$ and $c$ such that, if $k\ge 2$, $k\ne 6$, and $(\gamma,\tt)\in  \Lip(\mathbb{S}^1;\R^2)\times \mathcal{P}_k$ satisfies \eqref{k condition}, then
\begin{equation}
\label{equationTemp}
L(\gamma)+a\,(k-6)+(12)^{1/4}\, \sigma(\gamma, \t) \geq 2(12)^{1/4}\,\min\{1,A(\gamma)\} + c\,;
\end{equation}
and, if $(\gamma,\tt)\in  \Lip(\mathbb{S}^1;\R^2)\times \mathcal{P}_6$ with $1/5\ge A(\g)>0$, then
\begin{equation}
\label{equationTemp 6}
L(\gamma)+(12)^{1/4}\, \sigma(\gamma, \t) > 2(12)^{1/4}\,\min\{1,A(\gamma)\}\,.
\end{equation}

\bigskip

\noindent {\it The claim implies the theorem}: Indeed, when $k=6$, Theorem \ref{thm hales HI} implies that
\[
L(\gamma)+(12)^{1/4}\, \sigma(\gamma, \t) \ge 2(12)^{1/4}\,\min\{1,A(\gamma)\}
\]
whenever $(\gamma,\tt)\in  \Lip(\mathbb{S}^1;\R^2)\times \mathcal{P}_6$ and $A(\g)\ge 2\,\pi/(\sqrt{3}\,(6^2))$, and with equality if and only if $(\g,\tt)$ corresponds to a regular unit-area hexagon. Since $2\,\pi/(\sqrt{3}\,(6^2))< 1/5$, the combination of \eqref{equationTemp 6} with Theorem \ref{thm hales HI} proves the case $k=6$ of Theorem \ref{thm slight improve}. When $k\ge 2$, $k\ne 6$, we set
\[
a_1=a_2+a\,,\qquad a_2=\frac{c}8\,,
\]
and deduce \eqref{hales inequality x proof} from \eqref{equationTemp} as follows: \eqref{hales inequality x proof} with $k>6$ is equivalent to
\[
L(\gamma)+(a_1-a_2)\,(k-6)+(12)^{1/4}\,\s(\gamma, {\bf t})\ge 2\,(12)^{1/4}\,\min\{1,A(\gamma)\}\,,
\]
which is implied by \eqref{equationTemp} thanks to $a_1-a_2=a$; if, instead, $2\leq k \leq 5$, then by \eqref{equationTemp} we get
\begin{align*}
L(\gamma) +(12)^{1/4}\,\sigma(\gamma, {\bf t}) &\geq 2(12)^{1/4}\,\min\{1, A(\gamma)\}+c-a\,(k-6)
\\
&= 2(12)^{1/4}\,\min\{1, A(\gamma)\}+8\,a_2+(a_2-a_1)\,(k-6)\,,
\\
&\ge 2(12)^{1/4}\,\min\{1, A(\gamma)\}-(a_1+a_2)\,(k-6)\,,
\end{align*}
that is \eqref{hales inequality x proof}.

\bigskip

We can thus focus on the proof of the above claim. It will be convenient to set some notation. For every $(\g,\tt)\in\Lip(\mathbb{S}^1;\R^2)\times\P_k$ we set for brevity
\[
\e(\g,\tt)=a\,(k-6)+(12)^{1/4}\, \sigma(\gamma, \t)-2(12)^{1/4}\,\min\{1,A(\gamma)\}\,,
\]
where $a>0$ is a constant to be determined. Moreover, dropping the dependency on $\tt$ in the interest of brevity, we set, for each $i=1,...,k+1$,
\begin{eqnarray}\label{def gammai}
\g_i&=&\g\big|_{[\tt(i),\tt(i+1)]}\,,
\\\label{def Ai}
A_i&=&A(\g_i)+A\big(\ll\g(\tt(i+1)),\g(\tt(i))\rr\big)\,,
\end{eqnarray}
so that
\[
L(\g)=\sum_{i=1}^kL(\g_i)\,,\qquad \s(\g,\tt)=\sum_{i=1}^k\max\Big\{-\frac12,\min\Big\{A_i,\frac12\Big\}\Big\}\,.
\]
We now present a series of reduction steps.

\bigskip

\noindent {\it Step one}: For each $(\g,\tt)\in\Lip(\mathbb{S}^1;\R^2)\times\P_k$, there is $\bar\g\in\Lip(\mathbb{S}^1;\R^2)$ such that, for each $i=1,...,k$, the restriction of $\bar\g$ to the interval $[\tt(i),\tt(i+1)]$ is the unit speed parametrization of a circular arc with $\bar\g(\tt(i))=\g(\tt(i))$, and moreover
\begin{equation}
  \label{reduction gammabar}
  L(\g)+\e(\g,\tt)\ge L(\bar\g)+\e(\bar\g,\tt)\,,
\end{equation}
for every choice of $a>0$. (Notice carefully that neither $\g$ or $\bar\g$ are assumed or required to be injective.)

\medskip

To prove this, it is enough to define $\bar\g$ on $[\tt(i),\tt(i+1)]$, $1\le i\le k$, as the unit speed parametrization of a circular arc with endpoints $\bar\g(\tt(i))=\g(\tt(i))$ and $\bar\g(\tt(i+1))=\g(\tt(i+1))$ and such that
\[
A(\bar\g_i)=A(\g_i)\,,
\]
(where $\bar\g_i$ is the restriction of $\bar\g$ to $[\tt(i),\tt(i+1)]$, see \eqref{def gammai}). Then, by construction, $A(\bar\g)=A(\g)$ and $\s(\bar\g,\tt)=\s(\g,\tt)$, while, by Dido's inequality, $L(\bar\g)\le L(\g)$, so that \eqref{reduction gammabar} holds.

\medskip

Having proved step one, for the rest of the proof we will be able to work directly with pairs
\[
(\g,\tt)\in\Cir_k\,,
\]
as defined by the condition that, for each $i=1,...,k$, the restriction of $\g$ to the interval $[\tt(i),\tt(i+1)]$ is the unit speed parametrization of a circular arc.

\bigskip

\noindent {\it Step two}: We prove that, for each $(\g,\tt)\in\Cir_k$, there is $\bar\g$ such that $(\bar\g,\tt)\in\Cir_k$,
\[
\bar{A}_i\ge-\frac12\,,\qquad\forall i=1,...,k\,,
\]
and \eqref{reduction gammabar} holds (for any choice of $a>0$). Here $\bar{A}_i$ is defined from $\bar\g$ as $A_i$ was defined from $\g$ in \eqref{def Ai}.

\medskip

To prove this, we set $\bar\g=\g$ on those intervals $[\tt(i),\tt(i+1)]$ corresponding to $i=1,...,k$ such that $A_i\ge-1/2$; if, instead, $A_i<-1/2$, then we define $\bar\g$ on $[\tt(i),\tt(i+1)]$ as a unit-speed parametrization of a circular arc with endpoints $\g(\tt(i))$ and $\g(\tt(i+1))$ such that $\bar A_i=-1/2$: since $\bar\g$ and $\g$ have the same endpoints and $A_i<-1/2$, we see that this implies
\[
A(\bar\g_i)>A(\g_i)\,,\qquad L(\bar\g_i\big)\le L(\g_i)\,.
\]
In particular, $L(\g)\ge L(\bar\g)$, $A(\bar\g)\ge A(\g)$, and $\s(\bar\g,\tt)=\s(\g,\tt)$, which implies the validity of \eqref{reduction gammabar}.

\medskip

Thanks to step one and step two we have reduced to the case when
\begin{equation}
  \label{reduction second}
  (\g,\tt)\in\Cir_k\,,\qquad A_i\ge -\frac12\,,\quad\forall i=1,...,k\,.
\end{equation}

\bigskip

\noindent {\it Step three}: We prove that given $(\g,\tt)$ as in \eqref{reduction second} there is $(\bar\g,\tt)\in\Cir_k$ such that \eqref{reduction gammabar} holds (for any value of $a$), and such that all the $\bar{A}_i$'s in $[-1/2,1/2]$ have the same sign, that is, setting
\[
\bar{I}=\Big\{i=1,...,k: |\bar{A}_i|\le\frac12\Big\}\,,
\]
we have that either $\bar{A}_i\ge0$ for all $i\in\bar{I}$ or $\bar{A}_i\le0$ for all $i\in\bar{I}$.

\medskip

To prove this, let us consider the set $K$ of those pairs $(i,j)$ with $1\le i,j\le k$, $i\ne j$, and
\[
\frac12\ge A_i>0>A_j\ge-\frac12\,,
\]
which we can assume to be non-empty, otherwise, we have nothing to prove. Let $(i,j)\in K$ and let $\beta=A_{i}+A_{j}$. Let us assume for a moment that $\beta\ge0$. In this case, we define $\g^*$ by taking $\g^*=\g$ everywhere on $\mathbb{S}^1$ except on: $[\tt(j),\tt(j+1)]$, on which we define $\g^*$ as the unit speed parametrization of the segment from $\g(\tt(j))$ to $\g(\tt(j+1))$ (so that $A^*_{j}=0$ provided $A^*_j$ is defined from $\g^*$ in analogy to \eqref{def Ai}); and $[\tt(i),\tt(i+1)]$, on which we define $\g^*$ as the unit speed parametrization of a circular arc from $\g(\tt(i))$ to $\g(\tt(i+1))$ such that $A^*_{i}=\beta\ge0$. In this way $A(\g^*)=A(\g)$ and $\s(\g^*,\tt)=\s(\g,\tt)$ (recall the restriction $1/2\ge |A_i|,|A_j|$). Moreover, $L(\g_{j}^*)\le L(\g_{j})$ (as on $[\tt(j),\tt(j+1)]$ we have replaced a circular arc with its chord segment), and $L(\g^*_{i})\le L(\g_{i})$ (since $A^*_{i}=\beta\le A_{i}$, on $[\tt(i),\tt(i+1)]$ we have modified the curvature of a circular arc with fixed endpoints so to decrease the amount of enclosed area), so that, in total, $L(\g^*)\le L(\g)$.

\medskip

In summary, when $\beta\ge0$ we have constructed a curve $\g^*$ such that $L(\g^*)+\e(\g^*,\tt)\le L(\g)+\e(\g,\tt)$, $A^*_{i}=\beta$, $A^*_{j}=0$, and $\g^*=\g$ on $[\tt(h),\tt(h+1)]$ for all $h\ne i,j$. In the case $\beta<0$, an analogous construction produces $\g^*$ such that $L(\g^*)+\e(\g^*,\tt)\le L(\g)+\e(\g,\tt)$, $A^*_{i}=0$, $A^*_{j}=\beta$,  and $\g^*=\g$ on $[\tt(h),\tt(h+1)]$ for all $h\ne i,j$. By iterating this construction finitely many times we end up constructing a curve $\bar\g$ satisfying \eqref{reduction gammabar} and such that, with $I=\{i:|\bar{A}_i|\le1/2\}$, we have that either $\bar{A}_i \geq 0$ or $\bar{A}_i \leq 0$ for all $i\in \bar{I}$.

\medskip

Therefore, we have so far reduced to prove that \eqref{equationTemp} holds when $k\ge 2$, $k\ne 6$,
\begin{equation}
  \label{reduction second x}
  (\g,\tt)\in\Cir_k\,,\qquad A_i\ge -\frac12\,,\quad\forall i=1,...,k\,,
\end{equation}
and, setting $I=\{i:|A_i|\le 1/2\}$, we have that
\begin{equation}
  \label{reduction third}
  \mbox{either $A_i\ge0$ for all $i\in I$, or $A_i\le 0$ for all $i\in I$}\,.
\end{equation}

\bigskip

\noindent {\it Step four}: In this step we impose the first restriction on the constant $a>0$ appearing in the definition of $\e(\g,\tt)$, that is, we impose
\begin{equation}
\label{restriction a 1}
a < \frac{\sqrt{\pi}}{2}-\frac{3}{8}(12)^{1/4}\,.
\end{equation}
With this restriction on $a$, we prove that if $(\g,\tt)\in\Cir_k$ satisfies \eqref{reduction second x}, \eqref{reduction third}, and is such that $A_i>1/2$ for some $1\le i\le k$, then
\begin{equation}
  \label{a fine}
  L(\g)+\e(\g,\tt)\geq \min\Big\{\frac{7}{10},c(a)\Big\}\,,
\end{equation}
where
\begin{equation}
  \label{def of ca}
  c(a)=2\,\sqrt\pi-(3/2)\,(12)^{1/4}-4\,a\,,
\end{equation}
is strictly positive thanks to the strict sign in \eqref{restriction a 1}. We notice that this step, combined with the previous ones, effectively reduces the proof of \eqref{equationTemp} (thus of the theorem) to the case when $k\ge 2$, $k\ne 6$, $(\g,\tt)\in\Cir_k$ and
\begin{eqnarray}
\label{reduction four}
\mbox{either}\qquad \frac12\ge A_i\ge0\quad\forall i\,,\qquad\mbox{or}\qquad 0\ge A_i\ge-\frac12\quad\forall i\,.
\end{eqnarray}
We now turn to the proof of \eqref{a fine}:

\medskip

Let $(\g,\tt)\in\Cir_k$ satisfy \eqref{reduction second}, \eqref{reduction third} and be such that $A_{i_0}>1/2$ for some $i_0$. Set
\[
I = \Big\{i: A_i > \frac{1}{2}\Big\}\,,\qquad  J =\Big\{i:|A_i|\le\frac12\Big\}\,,
\]
so that $\{1,...,k\}=I\cup J$, $I\cap J=\varnothing$, $I\ne\varnothing$, and, setting
\[
J^+=\{i\in J:A_i\ge0\}\,,\qquad J^-=\{i\in J:A_i\le0\}\,,
\]
and recalling \eqref{reduction third}, either $J=J^+$ or $J=J^-$. We address the two cases by different arguments.

\medskip

\noindent {\it Case one, $J=J^+$}: In this case by $I\ne\varnothing$ we have
\[
\s(\g,\tt)=\frac{\#\,I}2+\sum_{i\in J}A_i\ge \frac12\,.
\]
Hence, by the trivial bound $a\,(k-6)\ge -4\,a$ (recall, $k\ge 2$), and by the isoperimetric inequality $L(\g)\ge 2\,\sqrt\pi\,\sqrt{A(\g)}$ (recall that $A(\g)\ge0$ by \eqref{k condition}), we find
\begin{eqnarray*}
  L(\g)+\e(\g,\tt)&\ge&2\,\sqrt\pi\,\sqrt{A(\g)}-4\,a+\frac{(12)^{1/4}}2 -2\,(12)^{1/4}\,\min\{1,A(\gamma)\}
  \\
  &\ge&2\,\sqrt\pi-4\,a+\frac{(12)^{1/4}}2 -2(12)^{1/4}=\Big(2\,\sqrt\pi-\frac32\,(12)^{1/4}\Big)-4\,a\,,
\end{eqnarray*}
where in the last step we have used the fact that $x\ge0\mapsto 2\,\sqrt\pi\, \sqrt{x}-2\,(12)^{1/4}\,\min\{1,x\}$ achieves its minimum at $x=1$. This proves \eqref{a fine} when $J=J^+$.

\medskip

\noindent {\it Case two, $J=J^-$}: Setting for brevity
\[
x=A(\g)\,,\qquad z = \sum_{i \in J}(-A_i)\,,
\]
we have $x,z\ge 0$, and, thanks to $I\ne\varnothing$,
\begin{equation}
\label{eq sigma bound}
  \s(\g,\tt)=\frac{\# I}{2}+\sum_{j\in J}A_j\ge \frac{1}{2}-z\,.
\end{equation}
Now, by Dido's inequality, we have that
\begin{equation}
  \label{usedido}
  L(\g_i)\ge \sqrt{2\,\pi\,|A_i|}\,,\qquad\forall i=1,...,k\,,
\end{equation}
so that $A_i> 1/2$ for $i\in I$ implies that
\begin{eqnarray*}
  L(\g)=\sum_{i=1}^k L(\g_i)\ge\sqrt{2\,\pi}\,\sum_{i=1}^k\sqrt{|A_i|}
  \ge\sqrt{\pi}\,\#I +\sqrt{2\,\pi}\,\sum_{i\in J}\sqrt{|A_i|}\,.
\end{eqnarray*}
Since $|A_i|\le 1/2$ for $i\in J$ and $\sqrt2\,|t|\le\sqrt{|t|}$ for $|t|\le1/2$ we conclude that
\begin{equation}
  \label{ma}
  L(\g)\ge\sqrt{\pi}\,\#I+2\,\sqrt\pi\,\sum_{i\in J}|A_i|\ge \sqrt\pi+2\,\sqrt\pi\,z\,,
\end{equation}
taking into account $I\ne\varnothing$ and the definition of $z$. By using as done in the other case that $a\,(k-6)\ge-4\,a$, we thus obtain a first lower bound on $L(\g)+\e(\g,\tt)$, namely
\begin{eqnarray}\nonumber
  L(\g)+\e(\g,\tt)&=&L(\g)+a\,(k-6)+(12)^{1/4}\s(\g,\tt)-2\,(12)^{1/4}\,\min\{1,A(\g)\}
  \\\nonumber
  &\ge&
  \sqrt\pi+2\,\sqrt\pi\,z-4\,a+(12)^{1/4}\Big(\frac{1}{2}-z\Big)-2\,(12)^{1/4}
  \\\label{eq bound 1}
  &\ge&
  \Big(2\,\sqrt\pi-(12)^{1/4}\Big)\,z+\Big(\sqrt\pi-\frac{3}2\,(12)^{1/4}\Big)-4\,a\,.
\end{eqnarray}
This lower bound will be sufficient to prove \eqref{a fine} only for certain values of $z$. For this reason, before discussing the latter point, we obtain a second, complementary, lower bound, that combined with \eqref{eq bound 1} will allow us to deduce \eqref{a fine}. To obtain this second lower bound, consider $\bar{\gamma}$ obtained from $\gamma$ by ``reflecting'' with respect to their chords all the circular arcs $\gamma_i$ corresponding to $i\in J$. In this way $L(\bar\g)=L(\g)$, while $A(\bar\g_i)=-A(\g_i)$ for all $i\in J$, gives
\begin{equation}\label{eq: area change}
A(\bar{\g})=\sum_{i\in I}A(\g_i)-\sum_{i\in J}A(\g_i)=A(\g)- 2\,\sum_{i\in J}A_i\,=x+2\,z\,.
\end{equation}
By applying the isoperimetric inequality to $\bar\g$ (notice that $A(\bar\g)\ge A(\g)\ge 0$) and using \eqref{eq: area change} we thus find
\begin{equation}
  \label{as detailed}
  L(\gamma)=L(\bar{\gamma})\ge 2\,\sqrt\pi\,\sqrt{A(\bar\g)} \geq 2\,\sqrt{\pi}\,\sqrt{x+2\,z}\,,
\end{equation}
which, combined with \eqref{eq sigma bound}, gives
\begin{eqnarray}\nonumber
  L(\g)+\e(\g,\tt)&=&L(\g)+a\,(k-6)+(12)^{1/4}\s(\g,\tt)-2\,(12)^{1/4}\,\min\{1,A(\g)\}
  \\\nonumber
  &\ge&
  2\,\sqrt{\pi}\,\sqrt{x+2\,z}-4\,a+(12)^{1/4}\Big(\frac{1}{2}-z\Big)-2\,(12)^{1/4}\,\min\{1,x\}\,,
  \\\label{eq bound 2}
  &\ge&
  2\,\sqrt{\pi}\,\sqrt{1+2\,z}-4\,a+(12)^{1/4}\Big(\frac{1}{2}-z\Big)-2\,(12)^{1/4}\,,
\end{eqnarray}
where we have used $x\ge0\mapsto 2\,\sqrt\pi\, \sqrt{x+2\,z}-2\,(12)^{1/4}\,\min\{1,x\}$ has a minimum at $x=1$. In summary, setting
\begin{eqnarray*}
  f(z)&=&  \Big(2\,\sqrt\pi-(12)^{1/4}\Big)\,z+\Big(\sqrt\pi-\frac{3}2\,(12)^{1/4}\Big)-4\,a\,,
  \\
  g(z)&=&2\,\sqrt{\pi}\,\sqrt{1+2\,z}-(12)^{1/4}\,z-\frac32\,(12)^{1/4}-4\,a\,,
\end{eqnarray*}
and $h=\max\{f,g\}$, we are left to prove that
\[
\inf_{z\ge 0}h(z)=\min\Big\{\frac7{10},c(a)\Big\}\,.
\]
Since $f$ is an affine, increasing function, $g$ is concave on $[0,\infty)$, and $f(3/2)=g(3/2)$, we have that $h=g$ on $[0,3/2]$ and $h=f$ on $[3/2,\infty)$. In particular, if $z\ge 3/2$, then, recalling \eqref{restriction a 1}, we find
\[
h(z)=f(z)\ge f(3/2)=4\,\sqrt{\pi}-3\,(12)^{1/4}-4\,a\ge2\,\sqrt{\pi}-\frac32\,(12)^{1/4}\ge\frac{7}{10}\,.
\]
If, instead, $z\in[0,3/2]$, then $h(z)\ge g(z)\ge g(0)=c(a)$ by definition \eqref{def of ca} of $c(a)$.

\bigskip

\noindent {\it Step five}: We conclude the proof of the theorem. Based on the previous four steps, we have to prove the following reduced version of our opening claim:

\medskip

\noindent {\it Reduced Claim}: There are positive constants $a$ and $c$ such that the following holds. Let $k\ge 2$ and $(\gamma,\tt)\in\Cir_k$ satisfy
\begin{equation}
  \label{k condition xx}
  A(\gamma)\geq \frac1{100}\,,\qquad\mbox{if $2\le k\le 5$}\,,
\end{equation}
and let either $0\le A_i\le 1/2$ for all $i=1,...,k$ or $0\ge A_i\ge -1/2$ for all $i=1,...,k$. Then, when $k\ne 6$,
\begin{equation}
\label{equationTemp x}
L(\gamma)+a\,(k-6)+(12)^{1/4}\, \sigma(\gamma, \t) \geq 2(12)^{1/4}\,\min\{1,A(\gamma)\} + c\,;
\end{equation}
and, when $k=6$ and $1/5\ge A(\g)>0$,
\begin{equation}
\label{equationTemp 6 x}
L(\gamma)+(12)^{1/4}\, \sigma(\gamma, \t) > 2(12)^{1/4}\,\min\{1,A(\gamma)\}\,.
\end{equation}

\medskip

To begin the proof of this reduced claim, let us recall that, so far, we have only imposed on $a$ the constraint \eqref{restriction a 1}. Since $(\sqrt\pi/2)-(3/8)\,(12)^{1/4}> 1/10$, we can work with any $a\le 1/10$. The choice
\begin{equation}
  \label{restriction a 2}
  a=\frac{3}{50}\,,
\end{equation}
is enforced from now on, for the sake of definiteness.

\medskip

Next, we notice that the case $k=2$ is easily dealt with. Indeed, in this case, $|A_i|\le 1/2$ implies $\s(\g,\tt)=A_1+A_2=A(\g)$ as well as $\min\{A(\g),1\}=A(\g$): therefore, by applying the isoperimetric inequality we find that
\[
L(\g)+\e(\g,\tt)\ge 2\,\sqrt{\pi\,A(\g)}-4\,a-(12)^{1/4}\,A(\g)\ge\min\Big\{h(1),h\Big(\frac1{100}\Big)\Big\}>\frac{9}{100}\,,
\]
where we have used that $h(x)=2\,\sqrt{\pi\,x}-(12)^{1/4}\,x-(6/25)$ is concave on $[0,1]$ and that $1\ge A(\g)\ge 1/100$.

\medskip

When $k\ge 3$ there is no immediate relation between $x=A(\g)$ and $y=\s(\g,\tt)=\sum_{i=1}^k\,A_i$ that we can use. To discuss this case it is convenient to collect the following four lower bounds:
\begin{eqnarray}
\label{Isoperimetric}
L(\gamma) &\geq& 2\,\sqrt{\pi\, x}\,,
\\
\label{Negative Arcs}
L(\gamma) &\geq& 2\,\sqrt{\pi\,(x+2\,y^-)}\,,
\\
\label{Dido}
L(\gamma) &\geq& 2\,\sqrt{\pi}\,|y|\,,
\\
\label{Polygonal}
L(\gamma) &\geq& 2\,\sqrt{k\,\tan\Big(\frac{\pi}{k}\Big)}\,\sqrt{(x-y)^+}\,,
\end{eqnarray}
where $z^+=\max\{z,0\}$ and $z^-=\max\{-z,0\}$. Of course \eqref{Isoperimetric} is just the isoperimetric inequality, as already used repeatedly, while \eqref{Negative Arcs} follows from the isoperimetric inequality  by the argument used in proving \eqref{as detailed}. Concerning \eqref{Dido}, by Dido's inequality $L(\g_i)\ge \sqrt{2\,\pi\,|A_i|}$ with $|A_i|\le 1/2$ for each $i$ we obtain
\[
L(\g)\ge\sqrt{2\,\pi}\,\sum_{i=1}^k\sqrt{|A_i|}\ge 2\,\sqrt{\pi}\,\sum_{i=1}^k|A_i|=2\,\sqrt\pi\,|\s(\g,\tt)|\,,
\]
where in the last identity we have taken into account that, under our assumptions on $\g$, all the $A_i$'s have the same sign. Finally, let us define $\pi_\g\in\Lip(\mathbb{S}^1;\R^2)$ by taking
\[
\pi_\g=\ll\g(\tt(1)),\g(\tt(2))\rr\,\circ\,\ll\g(\tt(2)),\g(\tt(3))\rr\,\circ\,\cdots\,\circ\,\ll\g(\tt(k-1)),\g(\tt(k))\rr\,\circ\,\ll\g(\tt(k)),\g(\tt(1))\rr\,.
\]
Clearly $L(\g)\ge L(\pi_\g)$ and $\pi_\g$ is a polygonal curve with $k$ vertexes. By the polygonal isoperimetric inequality {\it for immersed curves}\footnote{The point here is that $\pi_\g$ may not be injective, therefore we are not able to bound $L(\pi_\g)$ from below by directly using the polygonal isoperimetric inequality {\it for sets}, namely \eqref{isoperimetric inequality kgons}!} we have
\begin{equation}
  \label{see later}
  L(\pi_\g)^2\ge 4\,k\,\tan\Big(\frac\pi{k}\Big)\,A(\pi_\g)=4\,k\,\tan\Big(\frac\pi{k}\Big)\,\Big(A(\g)-\s(\g,\tt)\Big)\,,
\end{equation}
where in asserting $A(\pi_\g)=A(\g)-\s(\g,\tt)$ we have used $|A_i|\le 1/2$ for all $i=1,...,k$.

\medskip

Based on \eqref{Isoperimetric}, \eqref{Negative Arcs}, \eqref{Dido}, and \eqref{Polygonal}, we now set
\begin{eqnarray*}
g_k(x,y)&=&a\,(k-6)+(12)^{1/4}\,\Big(y-2\,\min\{1,x\}\Big)\,,
\\
f_{k,1}(x,y)&=&g_k(x,y)+2\,\sqrt{\pi x}\,,
\\
f_{k,2}(x,y)&=&g_k(x,y)+2\,\sqrt{\pi\,(x+2\,y^-)}\,,
\\
f_{k,3}(x,y)&=&g_k(x,y)+2\,\sqrt{\pi}\,|y|\,,
\\
f_{k,4}(x,y)&=&g_k(x,y)+2\,\sqrt{k\,\tan\Big(\frac{\pi}{k}\Big)}\,\sqrt{(x-y)^+}\,.
\end{eqnarray*}
Taking into account that each $f_{k,i}(\cdot,y)$ is increasing on $x\ge 1$, and setting
\[
f_k=\max\{f_{k,1},f_{k,2},f_{k,3},f_{k,4}\}\,,
\]
we can conclude the proof of the reduced claim, and thus of the theorem, by showing that
\begin{eqnarray}
\label{yokai2}
  &&\inf_{k\ge 7}\,\inf_{[0,1]\times\R}f_k>0\,,
  \\
  \label{yokai2 k6}
  &&\inf_{\R}f_6(x,\cdot)>0\,,\qquad\forall x\in\Big(0,\frac15\Big]\,,
  \\
  \label{yokai2 min5}
  &&\inf_{3\le k\le 5}\,\inf_{[1/100,1]\times\R}f_k>0\,.
\end{eqnarray}
To prove this we will combine different lower bounds on each the $f_{k,i}$'s.

\medskip

\noindent {\it First lower bound}: We prove the existence of a positive (computable) constant $c_0$ such that, for all $k\ge3$,
\begin{equation}
  \label{lower bound one}
  f_k(x,y)\ge c_0\,,\qquad\forall (x,y)\in[0,1]\times\Big\{\Big(-\infty,-\frac52\,\Big]\cup\Big[\,\frac45,\infty\Big)\Big\}\,.
\end{equation}
Indeed, if $k\ge 3$, $x\in[0,1]$, and $y\in\R$, then
\begin{eqnarray*}
f_k(x,y)&\ge&f_{k,3}(x,y)=2\,\sqrt{\pi}\,|y|+a\,(k-6)+(12)^{1/4}\,y-2\,(12)^{1/4}\,x
\\
&\ge&2\,\sqrt\pi\,|y|+(12)^{1/4}\,y-2\,(12)^{1/4}-3\,a
\end{eqnarray*}
This last function is non-negative if and only if (recall that $a=3/50$)
\[
\mbox{either}\quad y\ge \frac{2\,(12)^{1/4}+(9/50)}{2\,\sqrt\pi+(12)^{1/4}}\,,
\qquad\mbox{or}\quad y\le -\frac{2\,(12)^{1/4}+(9/50)}{2\,\sqrt\pi-(12)^{1/4}}\,.
\]
Taking into account that
\[
\frac{2\,(12)^{1/4}+(9/50)}{2\,\sqrt\pi+(12)^{1/4}}<\frac4{5}\,,\qquad \frac{2\,(12)^{1/4}+(9/50)}{2\,\sqrt\pi-(12)^{1/4}}<\frac52\,,
\]
we conclude that $2\,\sqrt\pi\,|y|+(12)^{1/4}\,y-2\,(12)^{1/4}-3\,a$ is uniformly positive when either $y\le -5/2$ or $y\ge 4/5$.

\medskip

\noindent {\it Second lower bound}: We prove that, for all $k\ge 3$,
\begin{equation}
  \label{lower bound two}
  f_k(x,y)\ge f_{k,1}(x,0)\qquad\forall x\in[0,1]\,, y\ge-\frac52\,.
\end{equation}
Since $f_k\ge f_{k,1}$, this is obvious from the definition of $f_{k,1}$ when $y\ge0$ (one just drops the term $(12)^{1/4}\,y$). Assuming now that $y\in[-5/2,0]$ we notice that
\begin{eqnarray*}
  f_k(x,y)&\ge&f_{k,2}(x,y)=2\,\sqrt{\pi\,(x+2\,y^-)}+a\,(k-6)+(12)^{1/4}\,y-2\,(12)^{1/4}\,x
  \\
  &=&2\,\sqrt{\pi\,(x+2\,|y|)}-(12)^{1/4}\,|y|+a\,(k-6)-2\,(12)^{1/4}\,x
  \\
  &\ge&2\,\sqrt{\pi\,x}+a\,(k-6)-2\,(12)^{1/4}\,x=f_{k,1}(x,0)\,,
\end{eqnarray*}
where in the last step we have used that
\begin{equation}
  \label{interesting}
  F(x,b)=2\,\sqrt{\pi\,(x+2\,b)}-(12)^{1/4}\,b-2\,\sqrt{\pi\,x}\ge0\,,\qquad\forall (x,b)\in[0,1]\times\Big[0,\frac52\Big]\,.
\end{equation}
To prove \eqref{interesting} we just notice that, for every $x\ge0$, $F(x,\cdot)$ is concave on $[0,\infty)$, so that
\[
\inf_{[0,5/2]}F(x,\cdot)\ge \min\Big\{F(x,0),F\Big(x,\frac52\Big)\Big\}=\min\Big\{0,F\Big(x,\frac52\Big)\Big\}\,.
\]
Now, for every $x\in(0,1]$ we have
\begin{eqnarray*}
&&F\Big(x,\frac52\Big)=2\,\sqrt{\pi}\,\sqrt{x+5}-\frac{5\,(12)^{1/4}}2-2\,\sqrt{\pi\,x}\,,
\\
&&\frac{\pa F}{\pa x}\Big(x,\frac52\Big)=\sqrt{\frac{\pi}{x+5}}-\sqrt{\frac{\pi}x}<0\,,
\end{eqnarray*}
so that
\[
\inf_{0\le x\le 1}F\Big(x,\frac52\Big)\ge F\Big(1,\frac52\Big)
=2\,\sqrt{6\,\pi} -\frac{5\,(12)^{1/4}}2-2\,\sqrt{\pi}>\frac25\,.
\]
This proves \eqref{interesting}, and thus \eqref{lower bound two}.

\medskip

\noindent {\it Third lower bound}: Motivated by \eqref{lower bound two}, we prove the existence of a (computable) positive constant $c_1$ such that
\begin{eqnarray}
  \label{fk1 1}
  f_{k,1}(x,0)\!\!&\ge&\!\! c_1\,,\qquad\forall x\in[0,1]\,,\qquad\mbox{if $k\ge 9$}\,,
  \\
  \label{fk1 1.5}
  f_{k,1}(x,0)\!\!&\ge&\!\! c_1\,,\qquad\forall x\in\Big[0,\frac9{10}\Big]\,,\qquad\mbox{if $7\le k\le 8$}\,,
  \\
  \label{fk1 2}
  f_{k,1}(x,0)\!\!&\ge&\!\! c_1\,,\qquad\forall x\in\Big[\frac1{100},\frac45\Big]\,,\qquad\mbox{if $3\le k\le 5$}\,,
\end{eqnarray}
and also show that
\begin{equation}
  \label{f61 1}
  f_{6,1}(x,0)>0\,,\qquad\,\,\forall x\in\Big(0,\frac9{10}\Big]\,.
\end{equation}
To prove this, let us notice that
\[
f_{k,1}(x,0)=2\,\sqrt{\pi x}+a\,(k-6)-2\,(12)^{1/4}\,x=-q_k(\sqrt{x})\,,
\]
where
\[
q_k(t)=2\,(12)^{1/4}\,t^2-2\,\sqrt{\pi}\,t-a\,(k-6)\,.
\]
The roots of $q_k$ are given by
\[
t_k^\pm=\frac{2\,\sqrt{\pi}\pm\sqrt{4\,\pi+4\,a\,(k-6)\,2\,(12)^{1/4}}}{4\,(12)^{1/4}}\,.
\]
Thanks to $a=3/50$ and $k\ge3$ we have
\[
4\,\pi+4\,a\,(k-6)\,2\,(12)^{1/4}\ge4\,\pi-\frac{36}{25}\,(12)^{1/4}> 9\,.
\]
Therefore both $t_k^\pm$ are real, with
\begin{eqnarray*}
&&t_k^-<0<t_k^+\,,\qquad\mbox{if $k\ge 7$}\,,
\\
&&t_6^-=0<t_6^+\,,
\\
&&0<t_k^-<t_k^+\,,\qquad\mbox{if $3\le k\le 5$}\,.
\end{eqnarray*}
In particular, if $k\ge 6$, then we have
\begin{eqnarray}\label{xka}
&&\big\{x\ge 0: f_{k,1}(x,0)\ge0\big\}=[0,z_k]\,,
\\\nonumber
&&\mbox{where}\quad
z_k:=(t_k^+)^2=\frac{\pi+a\,(k-6)\,(12)^{1/4}+\sqrt{\pi}\,\sqrt{\pi+a\,(k-6)\,2\,(12)^{1/4}}}{2\,(12)^{1/2}}\,,
\end{eqnarray}
while if $3\le k\le 5$, then (with $z_k$ as in \eqref{xka})
\begin{eqnarray}\label{xka k35}
&&\big\{x\ge0:f_{k,1}(x,0)\ge0\big\}=[y_k,z_k]\,,
\\\nonumber
&&\mbox{where}\quad
y_k:=(t_k^-)^2=\frac{\pi+a\,(k-6)\,(12)^{1/4}-\sqrt{\pi}\,\sqrt{\pi+a\,(k-6)\,2\,(12)^{1/4}}}{2\,(12)^{1/2}}\,,
\end{eqnarray}
Exploiting $a=3/50$ we find that if $k\ge 9$, then
\[
z_k\ge z_9=\frac{\pi+(9/50)\,(12)^{1/4}+\sqrt{\pi}\,\sqrt{\pi+(9/25)\,(12)^{1/4}}}{2\,(12)^{1/2}}>1\,,
\]
and since the last sign is strict (and $t_k^-<0$ for $k\ge 7$) we conclude that
\begin{equation}
  \label{pos1}
  \inf_{[0,1]}f_{k,1}(\cdot,0)\ge\inf_{[0,1]}f_{9,1}(\cdot,0)>0\,,\qquad\forall k\ge 9\,.
\end{equation}
For $8\ge k\ge 6$ we have
\begin{equation}
  \label{ya0}
  z_k\ge z_6=\frac{\pi}{(12)^{1/2}}>\frac9{10}\,,
\end{equation}
and the strict sign in \eqref{ya0}, $t_k^-<0$ if $k=7,8$, and $t_6^-=0$, gives
\begin{equation}
  \label{pos1.5}
  \min_{k=7,8}\,\inf_{[0,9/10]}f_{k,1}(\cdot,0)\ge\inf_{[0,9/10]}f_{7,1}(\cdot,0)>0\,,
\end{equation}
and prove \eqref{f61 1}, respectively. Finally, for the remaining cases $3\le k\le 5$, we see that $k\ge 3$ gives
\begin{eqnarray}\label{ya1}
z_k\ge z_3=\frac{\pi-(9/50)\,(12)^{1/4}+\sqrt{\pi}\,\sqrt{\pi-(9/25)\,(12)^{1/4}}}{2\,(12)^{1/2}}>\frac4{5}\,;
\end{eqnarray}
moreover, by differentiation, $t\in[2,6]\mapsto y_t$ is decreasing on $[2,6]$ and takes its minimum value at $k=6$ ($y_6=0$), so that, for $3\le k\le 5$, we have
\begin{eqnarray}\label{ya2}
  y_k\le y_3=\frac{\pi-(9/50)\,(12)^{1/4}-\sqrt{\pi}\,\sqrt{\pi-(9/25)\,(12)^{1/4}}}{2\,(12)^{1/2}}\,
  <\frac1{500}<\frac1{100}\,.
\end{eqnarray}
Thanks to the strict signs in \eqref{ya1} and \eqref{ya2} we find that
\begin{equation}
  \label{pos2}
  \inf_{3\le k\le 8}\inf_{[1/100,4/5]}f_{k,1}(x,0)>0\,.
\end{equation}
We conclude the proof of \eqref{fk1 1}, \eqref{fk1 1.5}, and \eqref{fk1 2} by setting
\[
c_1=\min\Big\{\inf_{[0,1]}f_{9,1}(\cdot,0),\inf_{[0,9/10]}f_{7,1}(\cdot,0),\min_{3\le k\le 5}\inf_{[1/100,4/5]}f_{k,1}(\cdot,0)\Big\}\,,
\]
where $c_1>0$ by \eqref{pos1}, \eqref{pos1.5}, and \eqref{pos2}.

\medskip

\noindent {\it Conclusion of the proof}: We conclude the proof of the theorem by proving \eqref{yokai2}, \eqref{yokai2 k6} and \eqref{yokai2 min5}. To begin with, we notice that \eqref{lower bound one}, \eqref{lower bound two} and \eqref{f61 1} imply \eqref{yokai2 k6}.

\medskip

Next, we prove \eqref{yokai2}. First of all, by \eqref{lower bound one}, \eqref{lower bound two}, and \eqref{fk1 1}, we find that
\begin{equation}
  \label{yokai2 k ge 9}
  \inf_{[0,1]\times\R}f_k\ge\min\{c_0,c_1\}>0\,,\qquad\forall k\ge 9\,.
\end{equation}
that is \eqref{yokai2} restricted to $k\ge 9$. Combining \eqref{yokai2 k ge 9} with  \eqref{lower bound one}, \eqref{lower bound two}, and \eqref{fk1 1.5}, we see that to complete the proof of \eqref{yokai2} we are left to show that
\begin{equation}
  \label{left1}
  \min_{k=7,8}\,\inf_{[9/10,1]\times[-5/2,4/5]}f_k>0\,.
\end{equation}
We begin the proof of \eqref{left1} by showing that
\begin{equation}
  \label{lb 78 1}
  \min_{k=7,8}\,\inf_{[9/10,1]\times[-5/2,-1/10]}f_k>0\,.
\end{equation}
The idea is obtaining a positive lower bound on $f_{k,2}$ all the way up to $y\le -1/10$ under the restriction $x\in[9/10,1]$ (recall that \eqref{lower bound two} holds only up to $y\le-5/2$, but on the wider range $x\in[0,1]$). To this end, let us recall that, if $y\le 0$, then
\[
f_k(x,y)\ge f_{k,2}(x,y)=2\,\sqrt{\pi\,(x+2\,|y|)}-(12)^{1/4}\,|y|+a\,(k-6)-2\,(12)^{1/4}\,x\,;
\]
now, $f_{k,2}(\cdot,y)$ ($y\le 0$) is decreasing on $[1/2,1]$ (and, thus, on $[9/10,1]$) since
\[
\frac{\pa f_{k,2}}{\pa x}(x,y)=\sqrt{\frac{\pi}{x+2\,|y|}}-2\,(12)^{1/4}\le\sqrt{\frac{\pi}{1/2}}-2\,(12)^{1/4}<-1\,,
\]
therefore, for all $y\le 0$ and $k\ge 7$,
\[
\inf_{[9/10,1]}f_k(\cdot,y)\ge f_{k,2}(1,y)\ge f_{7,2}(1,y)=2\,\sqrt{\pi\,(1+2\,|y|)}+\frac3{50}-(12)^{1/4}\,|y|-2\,(12)^{1/4}\,.
\]
Since $f_{7,2}(1,\cdot)$ is concave on $(-\infty,0]$ we find
\begin{eqnarray*}
\min_{k=7,8}\inf_{[9/10,1]\times[-5/2,-1/10]}f_k&\ge&\inf_{[-5/2,-1/10]}f_{7,2}(1,\cdot)
\\
&=&\min\Big\{f_{7,2}\Big(1,-\frac52\Big),f_{7,2}\Big(1,-\frac1{10}\Big)\Big\}\ge\min\Big\{\frac3{10},\frac3{100}\Big\}\,.
\end{eqnarray*}
(Notice that we cannot extend this lower bound to $[-5/2,0]$ since $f_{7,2}(1,0)<0$.) Having proved \eqref{lb 78 1}, we next show
\begin{equation}
  \label{lb 78 2}
  \min_{k=7,8}\,\inf_{[9/10,1]\times[1/10,4/5]}f_k>0\,.
\end{equation}
Working with $f_{k,1}$, and using the fact that $f_{k,1}(x,y)$ is increasing in $y$ and concave in $x$, we find that
\begin{eqnarray*}
\min_{k=7,8}\,\inf_{[9/10,1]\times[1/10,4/5]}f_k&\ge&\inf_{[9/10,1]}f_{7,1}\Big(\cdot,\frac1{10}\Big)
\\
&\ge&\min\Big\{f_{7,1}\Big(\frac9{10},\frac1{10}\Big),f_{7,1}\Big(1,\frac1{10}\Big)\Big\}\ge\min\Big\{\frac14,\frac3{50}\Big\}\,,
\end{eqnarray*}
thus proving \eqref{lb 78 2}. Thanks to \eqref{lb 78 1} and \eqref{lb 78 2}, in order to prove \eqref{left1}, and thus to conclude the proof of \eqref{yokai2}, we are left to show that
\begin{equation}
  \label{lb 78 3}
  \min_{k=7,8}\,\inf_{[9/10,1]\times[-1/10,1/10]}f_k>0\,.
\end{equation}
To this end we shall bound $f_k$ from below by using $f_{k,4}$. We first notice that since $x\ge 9/10$, $y\le 1/10$, $k\mapsto k\,\tan(\pi/k)$ is decreasing, and $k\ge 7$, we have
\[
\frac{\pa f_{k,4}}{\pa x}(x,y)=\sqrt{\frac{k\,\tan(\pi/k)}{x-y}}-2\,(12)^{1/4}\le
\sqrt{\frac{7\,\tan(\pi/7)}{(8/10)}}-2\,(12)^{1/4}< -1\,.
\]
Hence
\begin{equation}
  \label{hey}
  \min_{k=7,8}\,\inf_{[9/10,1]\times[-1/10,1/10]}f_k\ge \min_{k=7,8}\,\inf_{[-1/10,1/10]}f_{k,4}(1,\cdot)\,.
\end{equation}
Next we notice that
\[
f_{k,4}(1,y)=2\,\sqrt{k\,\tan\Big(\frac{\pi}{k}\Big)}\,\sqrt{1-y}+a\,(k-6)+(12)^{1/4}\,(y-2)\,,
\]
is concave on $y\in[-1/10,1/10]$, with
\begin{eqnarray*}
  &&f_{7,4}\Big(1,-\frac1{10}\Big)\ge \frac1{500}\,,\qquad f_{7,4}\Big(1,\frac1{10}\Big)\ge\frac{7}{1000}\,,
  \\
  &&f_{8,4}\Big(1,-\frac1{10}\Big)\ge \frac1{50}\,,\qquad f_{8,4}\Big(1,\frac1{10}\Big)\ge\frac{3}{100}\,,
\end{eqnarray*}
which, combined with \eqref{hey}, leads to \eqref{lb 78 3}. This completes the proof of \eqref{yokai2}.

\medskip

We finally prove \eqref{yokai2 min5}. Thanks to \eqref{lower bound one}, \eqref{lower bound two}, and \eqref{fk1 2}, it is enough to show that
\begin{equation}
  \label{hey3}
  \min_{3\le k\le 5}\,\,\inf_{[7/10,1]\times[-5/2,4/5]}\,f_k>0\,.
\end{equation}
To this end, we first show that
\begin{equation}
  \label{lb 34 1}
  \min_{3\le k\le 5}\,\,\,\inf_{[7/10,1]\times[-5/2,-3/10]}f_k>0\,.
\end{equation}
Indeed, as already noticed in the proof of \eqref{lb 78 1}, for every $y\le 0$, $f_{k,2}(\cdot,y)$ is decreasing on $[1/2,1]$. In particular, \[
\min_{3\le k\le 5}\,\,\,\inf_{[7/10,1]\times[-5/2,-3/10]}f_k\ge \inf_{[-5/2,-3/10]}f_{3,2}(1,\cdot)\,,
\]
where we have also used that $k\mapsto f_{k,2}(x,y)$ is increasing. Now,
\[
f_{3,2}(1,y)=2\,\sqrt{\pi\,(1+2\,|y|)}-(12)^{1/4}\,|y|-3\,a-2\,(12)^{1/4}\,,
\]
is concave on $y\in [-5/2,-3/10]$, with
\[
f_{3,2}\Big(1,-\frac52\Big)>\frac1{10}\,,\qquad f_{3,2}\Big(1,-\frac3{10}\Big)>\frac1{50}\,,
\]
so that \eqref{lb 34 1} follows. We next prove that
\begin{eqnarray}
  \label{lb 34 2}
  \min_{k=3,4}\,\,\inf_{[4/5,1]\times[-3/10,1/5]}f_k>0\,,
  \\
  \label{lb 5 2}
  \inf_{[4/5,1]\times[-3/10,19/100]}f_5>0\,,
\end{eqnarray}
To this end, we start noticing that when $3\le k\le 5$, $x\ge 4/5$, and $y\le 1/5$, we have
\[
\frac{\pa f_{k,4}}{\pa x}(x,y)=\sqrt{\frac{k\,\tan(\pi/k)}{x-y}}-2\,(12)^{1/4}
\le \sqrt{\frac{3\,\tan(\pi/3)}{3/5}}-2\,(12)^{1/4}\le-\frac45\,.
\]
In particular, using the concavity in $y$ of $f_{k,4}(1,y)$, we find
\begin{eqnarray*}
\min_{k=3,4}\inf_{[4/5,1]\times[-3/10,1/5]}f_k&\ge&\min_{k=3,4}\inf_{[-3/10,1/5]}f_{k,4}(1,\cdot)
\\
&=&\min_{k=3,4}\min\Big\{f_{k,4}\Big(1,-\frac3{10}\Big),f_{k,4}\Big(1,\frac1{5}\Big)\Big\}\,,
\end{eqnarray*}
where
\begin{eqnarray*}
  &&f_{3,4}\Big(1,-\frac3{10}\Big)\ge\frac{7}{10}\,,\qquad f_{3,4}\Big(1,\frac15\Big)\ge\frac12\,,
  \\
  && f_{4,4}\Big(1,-\frac3{10}\Big)\ge\frac1{10}\,,\qquad f_{4,4}\Big(1,\frac15\Big)\ge\frac1{10}\,,
\end{eqnarray*}
thus proving \eqref{lb 34 2}. In the case $k=5$ we would like to repeat the same argument but, unfortunately, $f_{5,4}(1,1/5)<0$. This is why, when $k=5$, we need to stop using the $f_{k,4}$ bound a bit below $y=1/5$. Stopping at $y=19/100$ works, and we find
\[
\inf_{[4/5,1]\times[-3/10,19/100]}f_5\ge\inf_{[-3/10,19/100]}f_{5,4}(1,y)
\ge\min\Big\{f_{5,4}\Big(1,-\frac3{10}\Big),f_{5,4}\Big(1,\frac{19}{100}\Big)\Big\}\,,
\]
where
\[
f_{5,4}\Big(1,-\frac3{10}\Big)\ge\frac{1}{200}\,,\qquad f_{5,4}\Big(1,\frac{19}{100}\Big)\ge\frac1{1000}\,,
\]
thus proving \eqref{lb 5 2}. Thanks to \eqref{lb 34 1}, \eqref{lb 34 2}, and \eqref{lb 5 2}, in order to deduce \eqref{hey3} (and thus complete the proof of the theorem), we are left to show that
\begin{eqnarray}
  \label{lb 34 3}
  \min_{k=3,4}\inf_{[4/5,1]\times[1/5,4/5]}f_k>0\,,
  \\
  \label{lb 5 3}
  \inf_{[4/5,1]\times[19/100,4/5]}f_5>0\,.
\end{eqnarray}
Using that $f_{k,1}$ is increasing in $y$ and concave in $x$ we find that
\begin{eqnarray*}
  \min_{k=3,4}\inf_{[4/5,1]\times[1/5,4/5]}f_k\ge \min_{k=3,4}\inf_{[4/5,1]}f_{k,1}\Big(\cdot,\frac15\Big)
  \ge\min_{k=3,4}\min\Big\{f_{k,1}\Big(\frac45,\frac15\Big),f_{k,1}\Big(1,\frac15\Big)\Big\}\,,
\end{eqnarray*}
where
\begin{eqnarray*}
  &&f_{3,1}\Big(\frac45,\frac15\Big)\ge\frac3{10}\,,\qquad   f_{3,1}\Big(1,\frac15\Big)\ge\frac1{100}\,,
  \\
  &&f_{4,1}\Big(\frac45,\frac15\Big)\ge\frac25\,,\qquad   f_{4,1}\Big(1,\frac15\Big)\ge\frac7{100}\,,
\end{eqnarray*}
thus proving \eqref{lb 34 3}. Similarly,
\begin{eqnarray*}
  \inf_{[4/5,1]\times[19/100,4/5]}f_5\ge \inf_{[4/5,1]}f_{5,1}\Big(\cdot,\frac{19}{100}\Big)
  \ge\min\Big\{f_{5,1}\Big(\frac45,\frac{19}{100}\Big),f_{5,1}\Big(1,\frac{19}{100}\Big)\Big\}\,,
\end{eqnarray*}
where
\[
f_{5,1}\Big(\frac45,\frac{19}{100}\Big)\ge\frac25 \,,\qquad f_{5,1}\Big(1,\frac{19}{100}\Big)\ge\frac1{10}\,,
\]
thus leading to \eqref{lb 5 3}. This completes the proof of the theorem.
\end{proof}

\appendix

\section{The isoperimetric inequality for immersed polygons}\label{Appendix B} In the proof of Theorem \ref{thm main}, and specifically in the proof of Theorem \ref{thm slight improve}, Section \ref{section hales intermed improv}, we have claimed that the isoperimetric inequality for $k$-gon \eqref{isoperimetric inequality kgons} is valid for {\it immersed} polygons (with the notion of oriented area used in place of the standard notion of area); see \eqref{see later}. Since we have not been able to find a proof of this more general inequality in the literature, we include one in this appendix.

\medskip

We identify a generic immersed polygon with $k$-edges with an ordered collection $\Pi_k$ of points $p_j\in\R^2$, $j=1,...,k$. Setting $p_j=(x_j,y_j)$ for the coordinates of these points, with $p_{k+1}=p_1$ the perimeter and oriented area of $\Pi_k$ are given by
\begin{eqnarray*}
&&P(\Pi_k)= \sum_{j=1}^k\sqrt{(x_j-x_{j+1})^2+(y_j-y_{j+1})^2}\,,
\\
&&A(\Pi_k) = \frac{1}{2}\sum_{j=1}^k (x_j\,y_{j+1}-x_{j+1}\,y_j)\,.
\end{eqnarray*}

\begin{theorem}[Isoperimetry for immersed $k$-gons]\label{lem: oriented polygonal isoperimetric} For every $\Pi_k$ as above we have
\[
P(\Pi_k)^2 \geq p(k)^2\,A(\Pi_k)\,,
\]
where $p(k)$ is the perimeter of a unit-area regular $k$-gon as defined in \eqref{p kappa}.
\end{theorem}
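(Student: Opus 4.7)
The plan is to show that among all immersed $k$-gons with fixed perimeter, the oriented area is maximized by the regular convex $k$-gon. By the scale-2 homogeneity of $P^2$ and $A$, I normalize $P(\Pi_k) = 1$; after translating to fix $p_1 = 0$, every $|p_j| \le P = 1$, so the space of such immersed $k$-gons is compact in $(\R^2)^{k-1}$, and the continuous functional $A$ attains its maximum at some $\Pi^*$.

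I then show $\Pi^*$ is equilateral via an ``ear-flipping'' argument. From the shoelace formula, the change in $A$ under a variation $p_j^* \mapsto p_j'$ of a single vertex is
\[
\delta A \;=\; \tfrac{1}{2}(p_j' - p_j^*)\times(p_{j+1}^* - p_{j-1}^*),
\]
which is proportional to the signed perpendicular distance from $p_j'$ to the line through $p_{j-1}^*$ and $p_{j+1}^*$. If one restricts $p_j'$ to the ellipse $\mathcal{E}$ with foci $p_{j-1}^*, p_{j+1}^*$ and sum of focal distances $\ell_{j-1}^* + \ell_j^*$, the perimeter of the polygon is preserved, and this signed distance is maximized at the minor-axis vertex of $\mathcal{E}$ on the ``positive'' side of the chord. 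By symmetry of $\mathcal{E}$, this vertex is the unique point of $\mathcal{E}$ at which the two focal distances coincide. Maximality of $\Pi^*$ therefore forces $\ell_{j-1}^* = \ell_j^*$ for every $j$, i.e. $\Pi^*$ is equilateral.

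Finally, for an equilateral $\Pi_k$ with $\ell_j = P/k$, Cauchy--Schwarz is an equality, $P^2 = k\sum_j \ell_j^2$. Expanding $z_j := x_j + i y_j = \sum_{n=0}^{k-1} c_n\,e^{2\pi i j n/k}$ in the discrete Fourier basis, standard computation yields
\[
\sum_{j=1}^k \ell_j^2 \;=\; 4k \sum_{n=1}^{k-1} |c_n|^2 \sin^2(\pi n/k), \qquad A(\Pi_k) \;=\; k \sum_{n=1}^{k-1} |c_n|^2 \sin(\pi n/k)\cos(\pi n/k).
\]
The elementary inequality $\sin\theta\cos\theta \le \cot(\pi/k)\,\sin^2\theta$ at $\theta = \pi n/k$ for each $n = 1,\dots,k-1$ (trivial when $\theta \ge \pi/2$; for $\theta < \pi/2$ equivalent to $\sin(\pi/k - \theta) \le 0$, which holds since $\theta \ge \pi/k$) gives termwise $A \le \cot(\pi/k)\sum_j \ell_j^2 / 4 = P^2\cot(\pi/k)/(4k) = P^2/p(k)^2$; combining this with the previous two steps yields $P(\Pi_k)^2 \ge p(k)^2\,A(\Pi_k)$. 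The main obstacle is the ear-flipping step: one must carefully track orientations to consistently define the ``positive side'' of the chord $\overline{p_{j-1}^* p_{j+1}^*}$, so that the movement of $p_j^*$ to the corresponding minor-axis vertex strictly increases the global oriented area $A(\Pi_k)$ rather than merely the local signed triangle area.
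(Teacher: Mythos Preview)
Your approach is essentially correct and genuinely different from the paper's. The paper proceeds by first invoking an external result (due to Khimshiashvili--Panina and L\'eger) that at any constrained critical point of $A$ with $P$ fixed, all vertices lie on a common circle or line; then, parameterizing by angles $\theta_j$ on that circle, it carries out a somewhat lengthy direct calculation (treating odd and even $k$ separately) to show that the angular increments $\theta_{j+1}-\theta_j$ are all congruent to $2\pi/k$. Your route---ear-flipping to reduce to the equilateral case, then a discrete Fourier bound---is more self-contained and arguably cleaner; the Fourier step in particular replaces the paper's case analysis by a single termwise inequality.

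Two remarks. First, your stated ``main obstacle'' is not one: the identity $\delta A=\tfrac12(p_j'-p_j^*)\times(p_{j+1}^*-p_{j-1}^*)$ is exact (not merely first order) and already records the change in the \emph{global} oriented area, so there is no local/global discrepancy to track. Maximality of $\Pi^*$ simply forces $p_j^*$ to be the point of the ellipse where the linear functional $p\mapsto p\times(p_{j+1}^*-p_{j-1}^*)$ is largest; this is one of the \emph{two} minor-axis vertices (both are equidistant from the foci, so your ``unique point'' is a slip), and either way $\ell_{j-1}^*=\ell_j^*$.

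Second, there is a small genuine gap you do not mention: you must rule out the degenerate case in which the ellipse collapses to a segment, i.e., $p_j^*$ lies on the closed segment $\overline{p_{j-1}^*\,p_{j+1}^*}$ (this includes the possibility of a zero-length edge). This is easily handled by induction on $k$: if it occurred, deleting $p_j^*$ would yield a $(k{-}1)$-gon with the same perimeter and area, and since $t\mapsto t\,\tan(\pi/t)$ is strictly decreasing the inductive bound for $(k{-}1)$-gons would force $A(\Pi^*)$ strictly below the area of the unit-perimeter regular $k$-gon, contradicting maximality. (The paper deals with essentially the same issue in a footnote explaining why the maximizer has exactly $k$ edges.)
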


\begin{proof} After scaling, we can prove Theorem \ref{lem: oriented polygonal isoperimetric} by showing that the maximum of $A(\Pi_k)$ under the constraint $P(\Pi_k)=1$ is achieved when $\Pi_k$ is a regular $k$-gon with unit-perimeter. Clearly a maximum point $\Pi_k$ exists\footnote{A compactness argument shows the existence of a maximizer with at least $3$ and at most $k$ edges. To see that a maximizer has exactly $k$ edges one notices that, should this not be the case, area could be increased by first taking a ``triangular variation'' of one edge (thus adding one additional edge), and then by rescaling so to preserve the perimeter constraint.} and satisfies $A(\Pi_k)>0$. As shown in \cite{khimshpanina,leger}, the fact that $\Pi_k$ is a critical point of $A$ at $P$ fixed implies that all the vertexes of $\Pi_k$ belong to a circle or to a line, and since $A(\Pi_k)>0$, the first case holds. For some $R>0$ we can thus set $p_j = (R\,\cos\theta_j,R\,\sin\theta_j)$ for $1\le j\le k$. Setting
\[
x\equiv y\qquad\mbox{if and only if}\qquad x=y+2\,\pi\,h\quad\mbox{for some $h\in\mathbb{Z}$}\,,
\]
extending the definition of $\theta_j$ to every $j\in\mathbb{Z}$ by setting $\theta_{j+h\,k}=\theta_j$ for all $j=1,...,k$ and $h\in\mathbb{Z}$, and noticing that $\theta_{j+1}\not\equiv\theta_j$ for all $j$ (for, otherwise, $\Pi_k$ would have less than $k$ edges), we see that
\begin{eqnarray*}
A(\Pi_k) = \frac{R^2}{2}\,\sum_{j=1}^k\sin(\theta_{j+1}-\theta_j)\,,
\qquad P(\Pi_k) = \sqrt{2}\,R\,\sum_{j=1}^k\sqrt{1-\cos(\theta_{j+1}-\theta_j)}\,.
\end{eqnarray*}
Exploiting $P(\Pi_k)=1$ to solve for $R$, we come to define
\begin{eqnarray*}
&&a(\theta_1,...,\theta_k) := 4\,A(\Pi_k) = \frac{N}{D^2}\,,\qquad\mbox{where}
\\
&&\om_j=\theta_{j+1}-\theta_j\,,\qquad N=\sum_{j=1}^k\sin(\om_j)\,,\qquad D=\sum_{j=1}^k\sqrt{1-\cos(\om_j)}\,.
\end{eqnarray*}
We prove the theorem by showing that, at a maximum point of $a$, it holds $\om_j\equiv 2\,\pi/k$ for all $j$.

\medskip

\noindent {\it Step one}: We prove that
\begin{eqnarray}
\label{j odd}
&&\om_j\equiv\om_1\,,\qquad\mbox{if $j$ is odd}\,,
\\
\label{j even}
&&\om_j\equiv\om_2\,,\qquad\mbox{if $j$ is even}\,.
\end{eqnarray}
To this end, it will suffice to show
\begin{equation}
\label{theta separation}
\theta_j-\theta_{j-1}\equiv\theta_{j+2}-\theta_{j+1}\,,\qquad \forall j\,.
\end{equation}
We start by computing (recall that $\om_j\ne 0$ for all $j$)
\[
\frac{\pa a}{\pa\theta_j}=\frac{\cos(\om_{j-1})-\cos(\om_j)}{D^2}+\frac{N}{D^3}\,\Big\{\frac{\sin(\om_{j-1})}{\sqrt{1-\cos(\om_{j-1})}}-
\frac{\sin(\om_{j})}{\sqrt{1-\cos(\om_{j})}}\Big\}\,.
\]
Since $\cos(x)=\cos^2(x/2)-\sin^2(x/2)$ implies $\sqrt{1-\cos(x)}=\sqrt{2}\,|\sin(x/2)|$, using also $\sin(x)=2\,\sin(x/2)\,\cos(x/2)$, we find that
\[
\frac{\sin(\om_j)}{\sqrt{1-\cos(\om_j)}}=\frac{2\,\sin(\om_j/2)\,\cos(\om_j/2)}{\sqrt{2}\,|\sin(\om_j/2)|}
=\sqrt2\,\s_j\,\cos\Big(\frac{\om_j}2\Big)
\]
where we have set $\s_j={\rm sign}(\sin(\om_j/2))$. Hence,
\[
D^3\,\frac{\pa a}{\pa\theta_j}=
D\,\Big\{\cos(\om_{j-1})-\cos(\om_j)\Big\}
+\sqrt{2}\,N\,\Big\{\s_j\,\cos\Big(\frac{\om_j}2\Big)-\s_{j-1}\,\cos\Big(\frac{\om_{j-1}}2\Big)\Big\}\,.
\]
The condition that $a$ achieves its maximum at $(\theta_1,...,\theta_k)$ thus implies that
\begin{equation}
  \label{eqn: partial zero}
  \frac{\cos(\om_{j})-\cos(\om_{j-1})}
{\s_j\,\cos(\om_j/2)-\s_{j-1}\,\cos(\om_{j-1}/2)}
=\frac{\sqrt{2}\,N}D\,,\qquad\forall j=1,...,k\,,
\end{equation}
where, notably, the right-hand side is independent of $j$. By $\cos(x)=\cos^2(x/2)-\sin^2(x/2)=2\,\cos^2(x/2)-1$ and thanks to $\s_j^2=1$, we find that
\begin{eqnarray*}
  &&\cos(\om_{j})-\cos(\om_{j-1})=2\,\Big(\cos^2(\om_j/2)-\cos^2(\om_{j-1}/2)\Big)
  \\
  &&=
  2\,\Big(\s_j\,\cos(\om_j/2)-\s_{j-1}\,\cos(\om_{j-1}/2)\Big)\,\Big(\s_j\,\cos(\om_j/2)+\s_{j-1}\,\cos(\om_{j-1}/2)\Big)\,,
\end{eqnarray*}
which combined into \eqref{eqn: partial zero} gives
\[
\s_j\,\cos\Big(\frac{\om_j}2\Big)+\s_{j-1}\,\cos\Big(\frac{\om_{j-1}}2\Big)=\frac{N}{\sqrt{2}\,D}\,,\qquad\forall j=1,...,k\,.
\]
In particular, for all $j=1,...,k$,
\[
\s_{j+1}\,\cos\Big(\frac{\om_{j+1}}2\Big)+\s_j\,\cos\Big(\frac{\om_j}2\Big)=\s_j\,\cos\Big(\frac{\om_j}2\Big)+\s_{j-1}\,\cos\Big(\frac{\om_{j-1}}2\Big)\,,
\]
that is (setting $\theta_0=\theta_k$, $\om_0=\om_k$, and $\s_0=\s_k$)
\begin{equation}
  \label{eq: angle simplification}
  \s_{j+1}\,\cos\Big(\frac{\om_{j+1}}2\Big)=\s_{j-1}\,\cos\Big(\frac{\om_{j-1}}2\Big)\,,\qquad\forall j=1,...,k\,.
\end{equation}
Let us now denote by $t_j$ the unique element of $(-\pi,0)\cup(0,\pi)$ such that $t_j\equiv\om_j/2$, so that $\s_j={\rm sign}(\sin(t_j))$ and \eqref{eq: angle simplification} takes the form
\begin{equation}
  \label{star}
  \s_{j+1}\,\cos(t_{j+1})=\s_{j-1}\,\cos(t_{j-1})\,,\qquad\forall j=1,...,k\,.
\end{equation}
If $t_{j+1},t_{j-1}\in(0,\pi)$, then $\s_{j+1}=\s_{j-1}=1$, and $\cos(t_{j+1})=\cos(t_{j-1})$ implies $t_{j+1}=t_{j-1}$; similarly, if $t_{j+1},t_{j-1}\in(-\pi,0)$, then $\s_{j+1}=\s_{j-1}=-1$, and \eqref{star} gives again $t_{j+1}=t_{j-1}$; finally, if, without loss of generality, $t_{j+1}\in(0,\pi)$ and $t_{j-1}\in(-\pi,0)$, then \eqref{star} gives $\cos(t_{j+1})=-\cos(t_{j-1})$, which in turn implies $t_{j+1}=t_{j-1}+\pi$. Looking back at the definition of $t_j$, this proves \eqref{theta separation}.

\medskip

\noindent {\it Step two}: We prove that there is $n\in\{1,...,k-1\}$ such that, for all $j$,
\begin{equation}
  \label{step two appendix}
  \om_j\equiv 2\,\pi\,\frac{n}k\,,\qquad\forall j\,.
\end{equation}
We have separate arguments depending on whether $k$ is odd or even.

\medskip

\noindent {\it When $k$ is odd}, using first that $k+1$ is even in combination with \eqref{j even}, and then $\theta_{j+k}=\theta_j$ for all $j$, we find that
\[
\om_2\equiv\om_{k+1}=\theta_{k+2}-\theta_{k+1}=\theta_2-\theta_1=\om_1\,,
\]
that is, $\om_2\equiv\om_1$ and thus, by \eqref{j even} and \eqref{j odd}, $\om_j\equiv \om_1$ for all $j$. By repeatedly using this fact we find
\begin{equation}
  \label{in place of}
  \theta_1=\theta_{k+1}\equiv \om_1+\theta_k=\om_1+\sum_{j=2}^k(\theta_j-\theta_{j-1})+\theta_1\equiv k\,\om_1+\theta_1\,,
\end{equation}
that is, $\om_1 \equiv 2\,\pi\,(n/k)$ for some $n\in\{1,...,k\}$. Since $\om_j\not\equiv 0$ for all $j$, it cannot be that $n=k$, and the conclusion follows.

\medskip

\noindent {\it When $k$ is even}, using \eqref{j even} and \eqref{j odd}, in place of \eqref{in place of} we find
\begin{eqnarray}
  \theta_1=\theta_{k+1}=\theta_1+\sum_{j=2}^{k+1}(\theta_j-\theta_{j-1})\equiv \theta_1+\frac{k}2\,\om_2
  +\frac{k}2\,\om_1\,.
\end{eqnarray}
In particular, there exists $n\in\N$ such that
\begin{equation}
  \label{exploit}
  \om_1+\om_2=4\,\pi\, \frac{n}k\,.
\end{equation}
Notice that $n$ cannot be an integer multiple of $k$ or of $k/2$ (for, otherwise, $\om_2\equiv-\om_1$ would imply that all the points $p_j$ lie on a same line, thus implying $A(\Pi_k)=0$). Without loss of generality, we can thus assume that
\begin{equation}
  \label{exploit 2}
  \om_1+\om_2=4\,\tau\,,\quad \tau:=\pi\,\frac{n}k\,\qquad n=1,...,k-1\,,\,\,n\ne \frac{k}2\,.
\end{equation}
and use \eqref{j even}, \eqref{j odd}, and \eqref{exploit 2} to find that
\begin{eqnarray*}
a(\theta_1,...,\theta_k)&=&\frac{\sum_{j=1}^k\sin(\om_j)}{(\sum_{j=1}^k\sqrt{1-\cos(\om_j)})^2}
\\
&=&\frac{(k/2)\,(\sin\om_1+\sin\om_2)}{(k/2)^2\,(\sqrt{1-\cos\om_1}+\sqrt{1-\cos\om_2})^2}
\\
&=&\frac2k\,\frac{\sin(\om_1)+\sin(4\,\tau-\om_1))}{(\sqrt{1-\cos(\om_1)}+\sqrt{1-\cos(4\,\tau-\om_1)})^2}\,.
\end{eqnarray*}
Notice that, up to this point, we know that $\om_1\not\equiv 0$ (because, otherwise, we would have $p_1=p_2$) and that $\om_1\not\equiv4\,\tau$ (because, otherwise, \eqref{exploit 2} would give $\om_2\equiv 0$ and thus $p_3=p_2$). Setting $f(\om)=g(\om)/h(\om)^2$ for
\[
g(\om)=\sin(\om)+\sin(4\,\tau-\om)\,,\qquad h(\om)=\sqrt{1-\cos(\om)}+\sqrt{1-\cos(4\,\tau-\om)}\,,
\]
the fact that $\Pi_k$ maximizes $A$ under the constraint that $P=1$ implies that $\om=\om_1$ is a maximum point of the function $f(\om)$ over the open set $\Om=\{\om\in\R:\om\not\equiv0,4\,\tau\}$. Now, $f$ is differentiable in $\Om$, with
\begin{eqnarray*}
&&f'(\om)=\frac{\cos(\om)-\cos(4\,\tau-\om)}{h(\om)^2}-\frac{2\,g(\om)}{h(\om)^3}\,
\Big\{\frac{\sin(\om)}{2\,\sqrt{1-\cos(\om)}}
+\frac{-\sin(4\,\tau-\om)}{2\,\sqrt{1-\cos(4\,\tau-\om)}}\Big\}
\\
&&=\frac{\cos(\om)-\cos(4\,\tau-\om)}{h(\om)^2}-\frac{\sqrt2\,g(\om)}{h(\om)^3}\,
\Big\{\s\Big(\frac{\om}2\Big)\,\cos\Big(\frac{\om}2\Big)-\s\Big(2\,\tau-\frac{\om}2\Big)\,\cos\Big(2\,\tau-\frac{\om}2\Big)\Big\}\,,
\end{eqnarray*}
where we have used again $\sqrt{1-\cos(x)}=\sqrt2\,|\sin(x/2)|$ and $\sin(x)=2\,\sin(x/2)\,\cos(x/2)$, and where we have set $\s(\theta)={\rm sign}(\sin\theta)$. By $|\sin\theta|=\s(\theta)\,\sin\theta$ we find
\begin{eqnarray*}
\frac{h(\om)^3}{\sqrt2}\,f'(\om)\!\!&=&\!\!\Big(\cos(\om)-\cos(4\,\tau-\om)\Big)\,\Big\{\s\Big(\frac{\om}2\Big)\,\sin\Big(\frac{\om}2\Big)+
\s\Big(2\,\tau-\frac{\om}2\Big)\,\sin\Big(2\,\tau-\frac{\om}2\Big)\Big\}
\\
&&-\Big(\sin(\om)+\sin(4\,\tau-\om)\Big)\Big\{\s\Big(\frac{\om}2\Big)\,\cos\Big(\frac{\om}2\Big)-\s\Big(2\,\tau-\frac{\om}2\Big)\,
\cos\Big(2\,\tau-\frac{\om}2\Big)\Big\}
\\
\!\!&=&\!\!-2\,\sin(2\,\tau)\,\sin(\om-2\,\tau)\,\Big\{\s\Big(\frac{\om}2\Big)\,\sin\Big(\frac{\om}2\Big)+
\s\Big(2\,\tau-\frac{\om}2\Big)\,\sin\Big(2\,\tau-\frac{\om}2\Big)\Big\}
\\
&&-2\,\sin(2\,\tau)\,\cos(\om-2\,\tau)\Big\{\s\Big(\frac{\om}2\Big)\,\cos\Big(\frac{\om}2\Big)-\s\Big(2\,\tau-\frac{\om}2\Big)\,
\cos\Big(2\,\tau-\frac{\om}2\Big)\Big\}\,.
\end{eqnarray*}
The conditions on $\tau$ in \eqref{exploit 2} guarantee that $\sin(2\,\tau)\ne 0$, and thus allow us to infer the following identity from the fact that $\om_1$ is a maximum point of $f$ on the open set $\Om$,
\begin{eqnarray*}
  &&\s\Big(\frac{\om_1}2\Big)\,\Big\{
   \cos(\om_1-2\,\tau)\,\cos\Big(\frac{\om_1}2\Big)
  + \sin(\om_1-2\,\tau)\,\sin\Big(\frac{\om_1}2\Big)
  \Big\}
  \\
  &&=\s\Big(2\,\tau-\frac{\om_1}2\Big)\,\Big\{
   \cos(\om_1-2\,\tau)\,\cos\Big(2\,\tau-\frac{\om_1}2\Big)
  - \sin(\om_1-2\,\tau)\,\sin\Big(2\,\tau-\frac{\om_1}2\Big)
  \Big\}\,,
\end{eqnarray*}
that is, by $\cos(\a+\b)=\cos\a\,\cos\b-\sin\a\,\sin\b$,
\begin{eqnarray}\label{sigmas}
\s\Big(\frac{\om_1}2\Big)\,\cos\Big(2\tau-\frac{1}{2}\om_1\Big)
=\s\Big(2\,\tau-\frac{\om_1}2\Big)\,\cos\Big(\frac{1}{2}\om_1\Big)\,.
\end{eqnarray}
If both $\sigma$'s in \eqref{sigmas} have the same sign, then (by $\cos(x)=\cos(y)$ if and only if $x\equiv y$ or $x\equiv -y$) we either have
\[
2\tau-\frac{1}{2}\,\om_1 \equiv \frac{1}{2}\,\om_1\,,\qquad\mbox{i.e.}\quad \om_1\equiv 2\,\tau\,,
\]
which implies by \eqref{exploit 2} that $\omega_2 \equiv 2\tau$, and hence $\omega_1\equiv \omega_2$; or
\[
2\tau-\frac{1}{2}\,\om_1 \equiv -\frac{1}{2}\,\om_1\,,
\]
which leads to $2\,\tau\equiv 0$, in contradiction with \eqref{exploit 2}. If, instead, the $\sigma$'s in \eqref{sigmas} have opposite signs, recalling that $\cos(x)=-\cos(y)=\cos(y+\pi)$ if and only if $x\equiv y+\pi$ or $x\equiv -y+\pi$, we either have
\[
2\tau-\frac{1}{2}\,\om_1 \equiv \frac{1}{2}\,\om_1+\pi\,,\qquad\mbox{i.e.}\quad \om_1\equiv 2\,\tau-\pi\,,
\]
which implies by \eqref{exploit 2} that $\omega_2 \equiv 2\tau+\pi$, and hence also $\omega_1\equiv \omega_2$; or
\[
2\tau-\frac{1}{2}\,\om_1 \equiv -\frac{1}{2}\,\om_1+\pi\,,
\]
which leads to $2\,\tau\equiv\pi$, again in contradiction with \eqref{exploit 2}. We have thus proved that $\om_1\equiv\om_2$. Combining this fact with \eqref{exploit 2} gives \eqref{step two appendix}.

\medskip

\noindent {\it Conclusion}: We have proved so far that there is $n\in\{1,...,k-1\}$ such that $\om_j=\theta_{j+1}-\theta_j=2\,\pi\,n/k$ for all $j$. We are thus left to prove that $n=1$. To this end we notice that
\[
a(\theta_1,...,\theta_k)=\frac{\sum_{j=1}^k\sin(\om_j)}{(\sum_{j=1}^k\sqrt{1-\cos(\om_j)})^2}
=\frac1k\,\frac{\sin(2\,\pi\,n/k)}{1-\cos(2\,\pi\,n/k)}\,.
\]
Since $t\mapsto \sin(t)/(1-\cos(t))$ is decreasing for $t\in(0,2\,\pi)$, and since $\Pi_k$ is a maximum point of $A$ under $P=1$, we conclude that $n=1$.
\end{proof}

\bibliography{references}
\bibliographystyle{is-alpha}

\end{document}